\author{Mark Shusterman}
\title{Ascending chains of finitely generated subgroups}
\newtheorem{theorem}{Theorem}[section]
\newtheorem{lemma}[theorem]{Lemma}
\newtheorem{proposition}[theorem]{Proposition}
\newtheorem{corollary}[theorem]{Corollary}
\numberwithin{equation}{section}
\newcommand{\lemref}[1]{\hyperref[#1]{Lemma \ref*{#1}}}
\newcommand{\thmref}[1]{\hyperref[#1]{Theorem \ref*{#1}}}
\newcommand{\propref}[1]{\hyperref[#1]{Proposition \ref*{#1}}}
\newcommand{\corref}[1]{\hyperref[#1]{Corollary \ref*{#1}}}
\def\moverlay{\mathpalette\mov@rlay}
\def\mov@rlay#1#2{\leavevmode\vtop{%
   \baselineskip\z@skip \lineskiplimit-\maxdimen
   \ialign{\hfil$\m@th#1##$\hfil\cr#2\crcr}}}
\newcommand{\charfusion}[3][\mathord]{
    #1{\ifx#1\mathop\vphantom{#2}\fi
        \mathpalette\mov@rlay{#2\cr#3}
      }
    \ifx#1\mathop\expandafter\displaylimits\fi}
\newcommand*{\defeq}{\mathrel{\rlap{%
                     \raisebox{0.27ex}{$\m@th\cdot$}}%
                     \raisebox{-0.27ex}{$\m@th\cdot$}}%
                     =}
\begin{document}

\maketitle

\abstract{ We show that a nonempty family of $n$-generated subgroups of a pro-$p$ group has a maximal element. This suggests that 'Noetherian Induction' can be used to discover new features of finitely generated subgroups of pro-$p$ groups. To demonstrate this, we show that in various pro-$p$ groups $\Gamma$ (e.g. free pro-$p$ groups, nonsolvable Demushkin groups) the commensurator of a finitely generated subgroup $H \neq 1$ is the greatest subgroup of $\Gamma$ containing $H$ as an open subgroup. We also show that an ascending sequence of $n$-generated subgroups of a limit group must terminate (this extends the analogous result for free groups proved by Takahasi, Higman, and Kapovich-Myasnikov).}

\section{Introduction}

Chain conditions play a prominent role in Algebra. A good example is the variety of results on Noetherian rings and their modules. In this work we consider chain conditions on profinite groups. All the group-theoretic notions considered for these groups should be understood in the topological sense, i.e. subgroups are closed, homomorphisms are continuous, generators are topological, etc. The ascending chain condition on finitely generated subgroups does not hold for pro-$p$ groups in general, and our first result is some kind of remedy for this.

\begin{theorem} \label{FirstRes}

Let $p$ be a prime number, let $n \in \mathbb{N}$, let $\Gamma$ be a pro-$p$ group, and let $\mathcal{F} \neq \emptyset$ be a family of $n$-generated subgroups of $\Gamma$. Then $\mathcal{F}$ has a maximal element with respect to inclusion.

\end{theorem}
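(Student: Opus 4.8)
The plan is to track, for each subgroup $H$, its minimal number of topological generators $d(H) = \dim_{\mathbb{F}_p} H/\Phi(H)$, where $\Phi(H) = \overline{H^p[H,H]}$ is the Frattini subgroup; recall that for a pro-$p$ group, $H$ is $n$-generated precisely when $d(H) \le n$ (the Burnside basis theorem / Frattini argument). I would argue by contradiction: if $\mathcal{F}$ had no maximal element, then by dependent choice there is a strictly ascending chain $H_1 \subsetneq H_2 \subsetneq \cdots$ in $\mathcal{F}$. Setting $H_\infty := \overline{\bigcup_i H_i}$, a closed (hence pro-$p$) subgroup of $\Gamma$, the goal is to show that the chain in fact stabilizes, which is absurd. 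Note that this amounts to the maximal condition, i.e. the assertion that among $n$-generated subgroups (for $n$ \emph{fixed}) there is no infinite strictly ascending chain, in contrast with the failure of the ascending chain condition for finitely generated subgroups with no uniform bound.

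The heart of the matter is the Frattini quotient $V := H_\infty/\Phi(H_\infty)$, a Hausdorff topological $\mathbb{F}_p$-vector space. Because $H_i \le H_\infty$ we have $\Phi(H_i) \subseteq \Phi(H_\infty)$, so the image $\overline{H_i} := H_i\Phi(H_\infty)/\Phi(H_\infty)$ is a quotient of $H_i/\Phi(H_i)$ and hence a subspace of $V$ of dimension at most $d(H_i) \le n$. These subspaces form an ascending chain, so their union $U := \bigcup_i \overline{H_i}$ is again a subspace of dimension at most $n$; in particular $U$ is finite, and therefore closed in $V$. On the other hand $\bigcup_i H_i$ is dense in $H_\infty$, so its image $U$ is dense in $V$. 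A dense closed subset is everything, so $U = V$; thus $V$ is finite of dimension $\le n$, and the ascending chain $\overline{H_1} \subseteq \overline{H_2} \subseteq \cdots$ of subspaces of the finite-dimensional space $V$ stabilizes, giving an $N$ with $\overline{H_N} = V$.

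Finally I would invoke the non-generator property of the Frattini subgroup: $\overline{H_N} = V$ means $H_N \Phi(H_\infty) = H_\infty$, and for a closed subgroup of a pro-$p$ group this forces $H_N = H_\infty$ (a proper closed subgroup lies in a maximal open subgroup, which contains $\Phi(H_\infty)$). Then $H_{N+1} \subseteq H_\infty = H_N \subseteq H_{N+1}$ yields $H_N = H_{N+1}$, contradicting strictness; this contradiction proves the theorem.

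I expect the main obstacle to be the temptation to apply Zorn's lemma directly, which fails because $\mathcal{F}$ is arbitrary and need not contain upper bounds for its chains. The Frattini-quotient argument circumvents this: instead of producing an upper bound inside $\mathcal{F}$, it shows a strictly ascending chain cannot exist at all, since the uniform bound $n$ caps the dimension of the limiting Frattini quotient and the non-generator property then collapses the chain. The one point needing care is the interplay of topology and algebra — namely that a subspace of dimension $\le n$ over $\mathbb{F}_p$ is finite, hence closed — which is exactly what upgrades density of $U$ to the equality $U = V$.
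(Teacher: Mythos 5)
Your proof is correct and follows essentially the same route as the paper: both arguments hinge on showing that the Frattini quotient of the closed subgroup generated by a chain is finite (of $\mathbb{F}_p$-dimension at most $n$) and is already covered by the image of a single chain member, whence the non-generator property of the Frattini subgroup forces that member to equal the whole limit. The only cosmetic differences are that you argue by contradiction with a countable chain obtained from dependent choice where the paper verifies the hypothesis of Zorn's Lemma for arbitrary chains, and that you bound the Frattini quotient directly via the images of the $H_i$ rather than first proving $d\bigl(\overline{\bigcup_i H_i}\bigr) \leq n$ by inspecting all finite quotients.
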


It is our hope that this simple result will unveil new properties of pro-$p$ groups and their finitely generated subgroups. An example is the following theorem, for which we need a definition. Given a prime number $p$, we say that a pro-$p$ group $\Gamma$ has a \textit{\textbf{Hereditarily Linearly Increasing Rank}} if for every finitely generated subgroup $H \leq_c \Gamma$ there exists an $\epsilon > 0$ such that for any open subgroup $U \leq_o H$ we have 

\begin{equation} \label{DefLIREq}
d(U) \geq \max \{d(H), \epsilon(d(H) - 1)[H : U]\}
\end{equation}
where $d(K)$ stands for the smallest cardinality of a generating set for the pro-$p$ group $K$. That is, our definition says that the minimal number of generators of finite index subgroups of $H$ grows monotonically, and linearly as a function of the index. Examples of groups with this property include free pro-$p$ groups, nonsolvable Demushkin groups, and groups from the class $\mathcal{L}$ all of whose abelian subgroups are procyclic.\footnote{By \cite[Theorem 4.6]{SZ0} the deficiency is $\geq 2$, and by the paragraph following \cite[Proposition 1]{HS} it grows linearly for open subgroups, so the number of generators grows linearly as well, and is monotonic by \cite[Theorem B]{KZ}.}

\begin{theorem} \label{SecondRes}

Let $p$ be a prime number, let $\Gamma$ be a pro-$p$ group with a hereditarily linearly increasing rank, and let $H \leq_c \Gamma$ be a nontrivial finitely generated subgroup. Then the commensurator of $H$ in $\Gamma$ $$
\mathrm{Comm}_{\Gamma}(H) \defeq \{g \in \Gamma \ | \ [H : H \cap gHg^{-1}],[gHg^{-1} : H \cap gHg^{-1}] < \infty \}$$
is the greatest subgroup of $\Gamma$ containing $H$ as an open subgroup. Also, the action of any $\mathrm{Comm}_{\Gamma}(H) \lneq_c L \leq_c \Gamma$ on $L/H$ is faithful.

\end{theorem}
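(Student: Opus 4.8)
The plan is to prove the two assertions in turn, in both cases extracting a finitely generated subgroup to which \thmref{FirstRes} applies.

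First, the easy inclusion and a reduction. One checks as usual that $\mathrm{Comm}_{\Gamma}(H)$ is a subgroup of $\Gamma$, and that any $K \leq_c \Gamma$ containing $H$ as an open subgroup lies inside it: for $g \in K$ the conjugate $gHg^{-1}$ again has index $[K:H]<\infty$ in $K$, so $H \cap gHg^{-1}$ is open in both $H$ and $gHg^{-1}$. Hence it suffices to prove that $H$ is itself open in $C := \mathrm{Comm}_{\Gamma}(H)$; granting this, $C$ belongs to the family of such overgroups and contains all of them, so it is the greatest one. Next I set up \thmref{FirstRes}: by the $\max$ in \eqref{DefLIREq} the rank is monotone under passage to finite-index subgroups, so every $K$ with $H \leq_o K$ satisfies $d(K) \leq d(H) \leq n$ and is therefore $n$-generated. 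The family $\mathcal{F}$ of all such $K$ is nonempty and consists of $n$-generated subgroups, so by \thmref{FirstRes} it has a maximal element $M$. Because $M$ and $H$ are commensurable they have the same commensurator, so it is enough to show $\mathrm{Comm}_{\Gamma}(M)=M$; note that maximality of $M$ in $\mathcal{F}$ says precisely that $M$ has no proper finite-index overgroup in $\Gamma$.

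Now the crux. Fix $g \in \mathrm{Comm}_{\Gamma}(M)$. Since the commensurator is a group, every conjugate $M^{g^i}$ is commensurable with $M$, so each finite intersection $\bigcap_{|i| \leq k} M^{g^i}$ is open in $M$; the hard part will be to show that the $g$-invariant subgroup $V := \bigcap_{i \in \mathbb{Z}} M^{g^i}$ is still open in $M$. This is exactly the place where mere commensurability is not enough — for a general group (e.g. $\langle a\rangle$ inside $\mathrm{GL}_2(\mathbb{Q})$) the statement is false — and where the linear lower bound in \eqref{DefLIREq}, rather than just its monotonicity, must be exploited: if the descending chain of finite intersections failed to stabilize up to finite index, the indices $[M : \bigcap_{|i|\leq k}M^{g^i}]$ would tend to infinity, and I would aim to contradict \eqref{DefLIREq} by bounding from above the ranks of these intersections inside $\overline{\langle M, g\rangle}$ while the hypothesis forces them to infinity. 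Once $V \leq_o M$ is known, the rest is routine: $V$ is $g$-invariant, finitely generated, and normal in $P := \overline{\langle V, g\rangle}$, and in the case where $g$ has infinite order modulo $V$ (which forces $d(P)\geq 2$, as every nontrivial subgroup of a procyclic pro-$p$ group is open) the open subgroups $U_k := V\overline{\langle g^{p^k}\rangle}$ satisfy $d(U_k) \leq d(V)+1$, whereas \eqref{DefLIREq} applied to $P$ gives $d(U_k) \geq \epsilon(d(P)-1)p^k \to \infty$. Hence $g^{p^k} \in V \leq M$ for some $k$, so $M$ is open in $\overline{\langle M, g\rangle}$, and maximality forces $g \in M$. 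Thus $\mathrm{Comm}_{\Gamma}(H)=M$ is closed, finitely generated, and contains $H$ as an open subgroup.

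Finally, for the faithfulness statement, let $C = \mathrm{Comm}_{\Gamma}(H) \lneq_c L$ and let $K := \bigcap_{l \in L} lHl^{-1} \trianglelefteq L$ be the kernel of the action of $L$ on $L/H$; I must show $K = 1$. Suppose $K \neq 1$. Since $K$ is normal in $L$ we have $L \leq N_{\Gamma}(K) \leq \mathrm{Comm}_{\Gamma}(K)$, so applying the first part of the theorem to $K$ shows $K$ is open in $\mathrm{Comm}_{\Gamma}(K) \supseteq L$; as $K \leq H \leq L$ this makes $H$ open in $L$, whence $L \leq \mathrm{Comm}_{\Gamma}(H) = C$, contradicting $C \lneq_c L$. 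The one delicate point here is the applicability of the first part to $K$, i.e. that the kernel may be taken finitely generated; I expect to secure this by replacing $K$ with a suitable finitely generated normal subgroup of $L$ contained in $H$, and this is the step I anticipate will require the most care in the second assertion.
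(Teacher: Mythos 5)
Your overall strategy (Noetherian induction via \thmref{FirstRes} to get a maximal overgroup $M$ of $H$, then rank-counting on the procyclic extension $V\overline{\langle g^{p^k}\rangle}$) is in the right spirit, and the second half of your ``crux'' is essentially a special case of the paper's counting technique. But there are two genuine gaps. First, the step you defer --- that $V=\bigcap_{i\in\mathbb{Z}}M^{g^i}$ is open in $M$ --- is the real content of the first assertion, and the strategy you sketch for it cannot work as stated: the finite intersections $W_k=\bigcap_{|i|\le k}M^{g^i}$ are open in $M$ but \emph{not} in $\overline{\langle M,g\rangle}$ (openness there is precisely what you are trying to establish), so \eqref{DefLIREq} cannot be applied to them inside $\overline{\langle M,g\rangle}$; and applying it inside $M$ would require an upper bound on $d(W_k)$ sublinear in $[M:W_k]$, which you have no way to produce (intersections of conjugates admit no a priori rank bound beyond the useless $d(M)[M:W_k]$). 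Relatedly, a maximal element of your family $\mathcal{F}$ need not be the \emph{greatest} element; the paper devotes a separate, delicate Noetherian induction (\thmref{RootThm}) to proving that a greatest element --- the root $\sqrt{H}$ --- exists, and only then identifies it with the commensurator by showing $\mathrm{Comm}_{\Gamma}(H)\le N_{\Gamma}(\sqrt{H})=\sqrt{H}$ via \corref{NormalizerCor}. Even granting $V\leq_o M$ and $g^{p^k}\in V$, your conclusion that $M$ is open in $\overline{\langle M,g\rangle}$ is unjustified: two subgroups sharing a common open subgroup need not generate a group containing it as an open subgroup --- that implication is again essentially the root theorem.

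Second, your proof of faithfulness applies the first assertion to the kernel $K$ of the action of $L$ on $L/H$, which requires $K$ to be finitely generated; closed normal subgroups of finitely generated pro-$p$ groups need not be finitely generated, and a nontrivial closed normal subgroup of $L$ need not contain a nontrivial finitely generated subgroup that is still normal in $L$, so the repair you propose is not available. The paper's \thmref{FaithActThm} avoids this entirely: it never needs $K$ to be finitely generated, because the counting is done via \lemref{GenCountLem} in the form $d(U)\le d(U/(U\cap K))+d(U\cap H)$ for a suitable $U\lhd_o N$, and only $U\cap H$ (open in the finitely generated $H$) must be finitely generated. Studying \thmref{FaithActThm} and \thmref{RootThm} will supply exactly the two ingredients your argument is missing.
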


Note that several assertions are encompassed in this statement. First, we claim that the family of subgroups of $\Gamma$ containing $H$ as an open subgroup has a (necessarily unique) greatest element with respect to inclusion. In particular,  there are only finitely many subgroups of $\Gamma$ that contain $H$ as an open subgroup. The second assertion is that the aforementioned greatest element is the commensurator of $H$ in $\Gamma$. Hence $H$ is an open subgroup of $\mathrm{Comm}_{\Gamma}(H)$, and thus also an open subgroup of its normalizer in $\Gamma$. Furthermore, given finitely generated commensurable $H_1,H_2 \leq_c \Gamma$ (i.e. $H_1 \cap H_2$ is open in both $H_1$ and $H_2$) we can apply \thmref{SecondRes} to $H_1 \cap H_2$ and conclude that it is an open subgroup of $\langle H_1 \cup H_2 \rangle$. At last, if $[\Gamma : H]$ is infinite, then by taking $L = \Gamma$ we find that $H$ contains no nontrivial normal subgroup of $\Gamma$.

Results analogous to these assertions are abundant in the literature, where the group $\Gamma$ is replaced by: 
\begin{enumerate}

\item Free groups - \cite[Corollary 8.8, Proposition 8.9]{KM}, \cite[Theorem 1]{KS}, \cite{R}.

\item Fuchsian groups - \cite{G}.

\item Hyperbolic groups - \cite[Theorems 1,3]{KaS}.

\item Limit groups - \cite[Theorem 1]{BH}, \cite[Theorem 4.1]{BHMS}, \cite[Chapters 4,5]{NS}, \cite[Section 6]{SZ0}.

\item Groups with a positive first $\ell^2$-Betti number - \cite[Corollary 5.13, Proposition 7.3]{PT}.

\item Free profinite groups - \cite[Main Theorem]{J}.

\item Absolute Galois groups of Hilbertian fields - \cite{Har}.

\item Free pro-$p$ groups and free pro-$p$ products - \cite[3.3, 3.5]{L}, \cite[Theorem C]{KZ}.

\item Nonsolvable pro-$p$ Demushkin groups and other pro-$p$ $IF$-groups with positive deficiency - \cite[3.12, 3.13]{K}, \cite{SZ}, \cite{HS}.

\item Pro-$p$ groups from the class $\mathcal{L}$ - \cite[Theorem C (5-7)]{SZ0}. 

\end{enumerate} 

It is our point of view that an assumption on the increase in the number of generators upon passing to finite index subgroups (e.g.  \eqref{DefLIREq}) creates a good framework for proving results like those stated in \thmref{SecondRes}, the paragraph following it, and the list above. Indeed, all the groups in the list (excluding some of those in 3), exhibit a linear growth of the number of generators as a function of the index of a subgroup (i.e. these groups have rank gradient $>0$). As a result, arguments from the proof of \thmref{SecondRes} can be used to obtain most of the results in the list above. For instance, \cite[pro-$p$ Greenberg Theorem]{L}, \cite[Lemma 3.12, Proposition 3.13]{K}, \cite[Theorem A]{SZ}, and a part of \cite[Theorem C (5-7)]{SZ0} are special cases of \thmref{SecondRes}. 

Next, we generalize Takahasi's theorem (see \cite[Theorem 1]{Tak}, \cite[Lemma]{Hig}, and \cite[Theorem 14.1]{KM}) which is the case of free $G$ in the following.

\begin{theorem} \label{ThirdRes}

Let $G$ be a group for which every subgroup $H \leq G$ whose profinite completion is finitely generated, is itself finitely generated. Let $n \in \mathbb{N}$, and let $\mathcal{F} \neq \emptyset$ be a family of $n$-generated subgroups of $G$. Then $\mathcal{F}$ contains a maximal element.

\end{theorem}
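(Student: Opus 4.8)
The plan is to apply Zorn's Lemma to the poset $(\mathcal{F}, \subseteq)$: once I know that every chain $\mathcal{C} \subseteq \mathcal{F}$ admits an upper bound lying in $\mathcal{F}$, a maximal element of $\mathcal{F}$ follows immediately. Given a chain $\mathcal{C}$, the only reasonable candidate for such a bound is the union $K \defeq \bigcup_{H \in \mathcal{C}} H$, which is a subgroup of $G$ because $\mathcal{C}$ is totally ordered. Everything therefore reduces to showing that $K$ is itself finitely generated, since then $K$ will coincide with one of the members of $\mathcal{C}$.

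To produce this, I would first show that the profinite completion $\widehat{K}$ is finitely generated, and then invoke the defining hypothesis on $G$ to descend to $K$ itself. For the former, I would check that every finite quotient $Q = K/N$ of $K$ is generated by $n$ elements. Indeed, the subgroups $HN/N$ with $H \in \mathcal{C}$ form a chain of subgroups of the finite group $Q$, hence possess a largest member $H^{*}N/N$; as the groups in $\mathcal{C}$ exhaust $K$, this largest member is all of $Q$. But $H^{*}N/N$ is a quotient of $H^{*} \in \mathcal{C}$, which is $n$-generated, so $Q$ is $n$-generated. Since a profinite group is topologically generated by $d$ elements exactly when all of its finite quotients are, this gives $d(\widehat{K}) \leq n$, so $\widehat{K}$ is finitely generated, and the standing hypothesis on $G$ then yields that $K$ is finitely generated as an abstract group.

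Writing $K = \langle g_1, \dots, g_m \rangle$, each $g_j$ lies in some member of $\mathcal{C}$, and since $\mathcal{C}$ is a chain there is a single $H_0 \in \mathcal{C}$ containing all of them; thus $K \subseteq H_0 \subseteq K$, so $K = H_0 \in \mathcal{C} \subseteq \mathcal{F}$. This is the desired upper bound, and Zorn's Lemma completes the argument.

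The step needing the most care is the reduction itself, namely realizing that no chain condition on profinite groups is available to be borrowed. One might hope to transport a profinite analogue of \thmref{FirstRes} to $\widehat{K}$, but this fails: ascending chains of $n$-generated closed subgroups of a profinite group need not stabilize, as the procyclic subgroups $\prod_{p \leq p_k} \mathbb{Z}_p$ of $\widehat{\mathbb{Z}}$ show. The point is that one never needs such a statement; it suffices that the finite quotients of the single group $K$ are uniformly $n$-generated, which is automatic for a directed union of $n$-generated groups, after which the hypothesis on $G$ supplies the abstract finite generation that a completion alone cannot detect.
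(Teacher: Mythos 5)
Your proposal is correct and follows essentially the same route as the paper: Zorn's Lemma applied to the union $U$ of a chain, the observation that every finite quotient of $U$ is already covered (hence realized) by a single member of the chain so that $d(\widehat{U})\leq n$, and then the hypothesis on $G$ to get abstract finite generation of $U$, forcing $U$ to equal a member of the chain. The only cosmetic difference is that you phrase the key step directly in terms of finite quotients of the abstract group, whereas the paper routes it through Lemma \ref{BasicProfLem} applied to the closures of the chain members in $\widehat{U}$; the underlying argument is identical.
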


Most notably, by \propref{ProfCompLimProp}, the theorem applies to limit groups, and also to Fuchsian groups. A consequence of \thmref{ThirdRes} is \corref{AutCor}.

\section{Profinite Groups}

\subsection{Directed families}

Given a set $I$, we say that a family of subgroups $\{A_i\}_{i \in I}$ of a group $G$ is directed if for every $i,j \in I$ there exists a $k \in I$ such that $A_k \geq A_i, A_j$. In this case, the abstract subgroup generated by the $\{A_i\}_{i \in I}$ is just their union. Furthermore, it follows by induction that for all $m \in \mathbb{N}$ and $i_1, \dots, i_m \in I$ there exists some $i \in I$ such that $A_i \geq A_{i_k}$ for each $1 \leq k \leq m$.

\begin{lemma} \label{BasicProfLem}

Let $\Gamma$ be a profinite group, let $\{A_i\}_{i \in I}$ be a directed family of subgroups of $\Gamma$, set $A \defeq \langle A_i \rangle_{i \in I}$, let $G$ be a finite group, and let $\tau \colon A \to G$ be an epimorphism. Then there exists some $j \in I$ such that $\tau|_{A_j}$ is a surjection.

\end{lemma}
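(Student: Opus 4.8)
The plan is to exploit the density of the abstract union $U \defeq \bigcup_{i \in I} A_i$ inside $A$ together with the openness of $\ker \tau$. Since the family $\{A_i\}_{i \in I}$ is directed, the paragraph preceding the lemma tells us that $U$ is already an abstract subgroup of $\Gamma$ (it is closed under products and inverses because any two of its elements lie in a common $A_k$). Hence the topologically generated subgroup $A = \langle A_i \rangle_{i \in I}$ is precisely the closure $\overline{U}$, and in particular $U$ is dense in $A$.

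Next I would observe that $\ker \tau$ is open in $A$: it is closed by continuity of $\tau$, and of finite index since $G$ is finite, and every closed finite-index subgroup of a profinite group is open. Consequently, for each $g \in G$ the fiber $\tau^{-1}(g)$ is a coset of $\ker \tau$, hence a nonempty (by surjectivity of $\tau$) open subset of $A$. Density of $U$ then forces $\tau^{-1}(g) \cap U \neq \emptyset$, so for every $g \in G$ there is an index $i_g \in I$ and an element $a_g \in A_{i_g}$ with $\tau(a_g) = g$.

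Finally, I would invoke directedness one last time. As $G$ is finite, the remark before the lemma furnishes a single $j \in I$ with $A_j \geq A_{i_g}$ for all $g \in G$. Then every $a_g$ lies in $A_j$, so $\tau|_{A_j}$ attains every value in $G$ and is therefore surjective, as required.

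The only delicate point is the identification $A = \overline{U}$, namely that topological generation by a directed family returns exactly the closure of the abstract union; this is exactly what directedness buys us, and it is recorded in the preamble to the lemma. Everything afterward is a routine combination of the openness of a finite-index closed subgroup with a single finite application of the directedness property, so I do not anticipate any genuine obstacle.
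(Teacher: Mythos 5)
Your proof is correct and follows essentially the same route as the paper: both arguments reduce to showing that every $g\in G$ lies in $\tau(A_{i_g})$ for some $i_g\in I$ and then invoke directedness once, using the finiteness of $G$, to find a single $j$ with $\tau(A_j)=G$. The only cosmetic difference is that the paper pushes the closure forward (writing $\tau(\overline{\bigcup_i A_i})\subseteq\overline{\tau(\bigcup_i A_i)}=\bigcup_i\tau(A_i)$ since $G$ is finite), whereas you pull back the open fibers $\tau^{-1}(g)$ and intersect them with the dense union --- two equivalent ways of exploiting continuity of $\tau$ and discreteness of $G$.
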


\begin{proof}

Note that
\begin{equation} \label{GroupCoverEq}
G = \tau(A) = \tau(\langle A_i \rangle_{i \in I}) = \tau(\overline{\bigcup_{i \in I} A_i}) \subseteq \overline{\tau(\bigcup_{i \in I} A_i)} = \bigcup_{i \in I} \tau(A_i)
\end{equation}
so for each $g \in G$ there exists some $i_g \in I$ such that $g \in \tau(A_{i_g})$. Since $G$ is finite, directedness implies that there exists some $j \in I$ such that $A_j \geq A_{i_g}$ for all $g \in G$. It follows that $\tau(A_j) = G$ as required.

\end{proof}

\begin{corollary} \label{BasicProfCor}

Let $\Gamma$ be a profinite group, let $n \in \mathbb{N}$, and let $\{A_i\}_{i \in I}$ be a directed family of $n$-generated subgroups of $\Gamma$. Then $A \defeq \langle A_i \rangle_{i \in I}$ is $n$-generated.

\end{corollary}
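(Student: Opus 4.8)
The plan is to reduce the corollary to the standard characterization of topological generation in profinite groups: a profinite group is topologically generated by $n$ elements precisely when each of its finite continuous quotients is $n$-generated. First I would record that $A \defeq \langle A_i \rangle_{i \in I}$ is itself a profinite group, being a closed subgroup of $\Gamma$. Since the family $\{A_i\}_{i \in I}$ is directed, the abstract subgroup it generates is the union $\bigcup_{i \in I} A_i$, and $A$ is exactly the closure of this union; this is the setting in which \lemref{BasicProfLem} applies verbatim.

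Next I would verify the generation criterion on finite quotients. Let $\tau \colon A \to G$ be an arbitrary continuous epimorphism onto a finite group $G$. Applying \lemref{BasicProfLem} to the directed family $\{A_i\}_{i \in I}$ and the map $\tau$ produces an index $j \in I$ with $\tau(A_j) = G$. As $A_j$ is $n$-generated, the $\tau$-images of $n$ topological generators of $A_j$ topologically generate $\tau(A_j) = G$, so $G$ is $n$-generated. Hence every finite continuous quotient of $A$ is generated by $n$ elements.

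Finally I would run the usual compactness argument to promote this to a statement about $A$ itself. For each open normal subgroup $N \trianglelefteq_o A$, let $X_N \subseteq A^n$ denote the set of $n$-tuples whose images generate $A/N$. Because the quotient map $A^n \to (A/N)^n$ is continuous with finite (discrete) target, $X_N$ is a finite union of clopen sets, hence closed; and it is nonempty by the previous paragraph, since any generating $n$-tuple of $A/N$ lifts coordinatewise to $A$. The assignment $N \mapsto X_N$ is antitone, so for finitely many $N_1, \dots, N_k$ one has $X_{N_1 \cap \cdots \cap N_k} \subseteq \bigcap_{t} X_{N_t}$, giving the finite intersection property. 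By compactness of $A^n$ the total intersection $\bigcap_N X_N$ is nonempty, and any tuple in it generates $A/N$ for every open normal $N$, so the closed subgroup it generates surjects onto every finite quotient of $A$ and therefore equals $A$. Thus $d(A) \leq n$.

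The genuinely new input is confined to the middle step, where \lemref{BasicProfLem} guarantees that each finite quotient of $A$ already appears as a quotient of a single $A_j$. The concluding reduction from ``every finite quotient is $n$-generated'' to ``$A$ is $n$-generated'' is where the profinite topology and the compactness of $A^n$ do the work; this is the only point requiring care, and it is a well-known fact (e.g. the standard characterization of $d(\cdot)$ for profinite groups in Ribes--Zalesskii) that one may simply cite rather than reprove.
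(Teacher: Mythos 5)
Your proposal is correct and follows essentially the same route as the paper: apply \lemref{BasicProfLem} to an arbitrary finite continuous quotient $\tau \colon A \to G$ to get $\tau(A_j) = G$ and hence $d(G) \leq n$, then invoke the fact that $d(A)$ is determined by the finite quotients of $A$ (the paper cites \cite[Lemma 2.5.3]{RZ} for this, where you spell out the compactness argument before noting it can be cited). The extra detail in your final paragraph is a correct proof of that cited fact, not a different approach.
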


\begin{proof}

Let $\tau \colon A \to G$ be an epimorphism onto a finite group. By \lemref{BasicProfLem}, there exists some $j \in I$ such that $\tau(A_j) = G$. Hence, $d(G) \leq d(A_j) \leq n$, so $d(A) \leq n$ since by \cite[Lemma 2.5.3]{RZ} we know that $d(A)$ is determined by the finite homomorphic images of $A$.

\end{proof}

For the proof of \thmref{FirstRes} recall that the Frattini subgroup of a profinite group $U$, denoted by $\Phi(U)$, is defined to be the intersection of all maximal subgroups of $U$.

\begin{theorem} \label{FratThm}

Let $p$ be a prime number, let $n \in \mathbb{N}$, let $\Gamma$ be a pro-$p$ group, and let $\mathcal{F} \neq \emptyset$ be a family of $n$-generated subgroups of $\Gamma$. Then $\mathcal{F}$ contains a maximal element.

\end{theorem}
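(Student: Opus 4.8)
The plan is to use Zorn's lemma together with the key finiteness input provided by \corref{BasicProfCor}. Suppose toward a contradiction that $\mathcal{F}$ has no maximal element. Then I want to produce a directed (in fact, a chain) subfamily whose union is an $n$-generated subgroup strictly containing each of its members, and derive a contradiction from the structure of pro-$p$ groups. The first step is to verify the hypothesis of Zorn's lemma: I would take an arbitrary chain $\mathcal{C} \subseteq \mathcal{F}$ (more generally a directed subfamily), set $A \defeq \langle C \rangle_{C \in \mathcal{C}}$, and observe that by \corref{BasicProfCor} the group $A$ is $n$-generated, so it is a legitimate candidate for an upper bound. The subtlety is that $A$ need not itself belong to $\mathcal{F}$, so Zorn's lemma does not apply directly to $\mathcal{F}$; the finiteness we get from $n$-generation must be leveraged differently.

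The main obstacle, and where I expect the Frattini subgroup to enter, is precisely this gap: an ascending union of $n$-generated subgroups can fail to lie in $\mathcal{F}$, and indeed pro-$p$ groups genuinely lack the ascending chain condition on finitely generated subgroups (as the introduction emphasizes), so the union of a chain can be strictly larger than every member. The resolution is to exploit the special property of pro-$p$ groups that an $n$-generated pro-$p$ group $U$ has $U/\Phi(U) \cong (\mathbb{Z}/p\mathbb{Z})^{d(U)}$ with $d(U) \leq n$, so the dimension $d(U)$ over $\mathbb{F}_p$ is a bounded nonnegative integer invariant. The plan is to use this invariant to control chains. Concretely, I would argue that along any strictly ascending chain $H_1 \lneq H_2 \lneq \cdots$ in $\mathcal{F}$, passing to a larger member cannot decrease the relevant rank, and since $d(\cdot)$ is bounded above by $n$, the sequence of ranks must eventually stabilize; I then want to show that once the rank stabilizes, no further strict containment is possible among $n$-generated pro-$p$ groups.

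The crucial lemma I would isolate is the following rigidity statement for pro-$p$ groups: if $H \leq_c U$ are both $n$-generated pro-$p$ groups and $H \neq U$, then $d(H) < d(U)$, or more precisely $H \Phi(U) \neq U$ forces $H$ to be contained in a proper open subgroup, pushing $d$ strictly down. The key point is that for a pro-$p$ group, a subgroup $H$ equals $U$ as soon as it surjects onto $U/\Phi(U)$ (this is the topological Frattini/Burnside basis argument: generators of $U$ modulo $\Phi(U)$ generate $U$). Hence if $H \lneq U$ then the image of $H$ in $U/\Phi(U)$ is a proper $\mathbb{F}_p$-subspace, so $d(H) \geq \dim_{\mathbb{F}_p}(H\Phi(U)/\Phi(U))$, and comparing these dimensions gives a strict monotonicity that cannot persist within the bound $n$. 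I would carry this out by choosing from $\mathcal{F}$ an element $H$ of maximal rank $d(H)$ (possible since ranks lie in $\{0,1,\dots,n\}$), and then among all members of $\mathcal{F}$ of that maximal rank choosing one that is maximal in a secondary sense, finally arguing via the Frattini quotient that this $H$ admits no strict enlargement inside $\mathcal{F}$, hence is the desired maximal element. The delicate part will be making the rank-based selection precise enough that maximality of the rank together with the Frattini-quotient surjectivity criterion actually forces maximality with respect to inclusion, rather than merely bounding the length of chains.
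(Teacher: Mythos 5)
There is a genuine gap here: you reject the direct Zorn argument for the wrong reason, and the mechanism you substitute rests on a false rigidity claim. The paper's proof does apply Zorn's lemma directly to $\mathcal{F}$. Given a chain $\mathcal{C}$, let $U$ be the subgroup it generates; by \corref{BasicProfCor} $U$ is $n$-generated, so $U/\Phi(U)$ is a \emph{finite} group, and applying \lemref{BasicProfLem} to the epimorphism $U \to U/\Phi(U)$ yields a member $H \in \mathcal{C}$ with $H\Phi(U) = U$; the Frattini property (\cite[Corollary 2.8.5]{RZ}) then forces $H = U$. Thus the union of a chain of $n$-generated subgroups is always \emph{equal to one of its members}, hence lies in $\mathcal{F}$. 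Your worry that ``the union of a chain can be strictly larger than every member'' is precisely what this argument rules out; the failure of the ascending chain condition mentioned in the introduction concerns chains along which the number of generators is unbounded, not chains of $n$-generated subgroups for a fixed $n$. This is the key idea your proposal is missing.

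The replacement you propose does not work. The ``crucial lemma'' --- that $H \lneq U$ with both $n$-generated pro-$p$ groups forces $d(H) < d(U)$ --- is false: $p\mathbb{Z}_p \lneq \mathbb{Z}_p$ with both of rank $1$, and an open subgroup of index $m$ in a free pro-$p$ group of rank $2$ has rank $1+m > 2$, so rank is not monotone under inclusion in either direction (in particular ``passing to a larger member cannot decrease the rank'' is also false). Consequently the plan to pick a member of maximal rank and argue that no strict enlargement of equal rank exists cannot be completed: strict inclusions between $n$-generated subgroups of equal rank are ubiquitous, and nothing in your outline bounds the length of such chains. What is true, and what you should use, is the Frattini-quotient surjectivity criterion you correctly state ($H\Phi(U) = U$ implies $H = U$ for $H \leq_c U$ pro-$p$) --- but applied to a member of the chain sitting inside the group generated by the whole chain, located via \lemref{BasicProfLem}, rather than to pairs of members of $\mathcal{F}$ compared by rank.
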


\begin{proof}

Let $\mathcal{C}$ be a chain in $\mathcal{F}$, and let $U$ be the subgroup of $\Gamma$ generated by the subgroups in $\mathcal{C}$. Since $\mathcal{C}$ is a chain, it is directed, so  $d(U) \leq n$ by \corref{BasicProfCor}. As $U$ is a finitely generated pro-$p$ group, \cite[Proposition 2.8.10]{RZ} tells us that $U \to U/\Phi(U)$ is an epimorphism onto a finite group. By \lemref{BasicProfLem}, there exists some $H \in \mathcal{C}$ such that $H\Phi(U) =  U$, so in view of \cite[Corollary 2.8.5]{RZ} we must have $H = U$. Thus $U \in \mathcal{F}$ is an upper bound for $\mathcal{C}$. By Zorn's Lemma, $\mathcal{F}$ has a maximal element.

\end{proof}

Note that our assumption that $\Gamma$ is not merely a profinite group but a pro-$p$ group, has been used in the proof to conclude that $\Phi(U) \leq_o U$ for any finitely generated $U \leq_c \Gamma$. By \cite[Proposition 2.8.11]{RZ}, this conclusion holds under the weaker assumption that $\Gamma$ is a prosupersolvable group with order divisible by only finitely many primes. Such groups have been studied, for instance, in \cite{AS}.

\subsection{Hereditarily linearly increasing rank}

\subsubsection{Basic properties}

\begin{proposition} \label{UpWProp}

Let $p$ be a prime number, let $\Gamma$ be a pro-$p$ group with a hereditarily linearly increasing rank, let $H \leq_c \Gamma$ be a finitely generated subgroup, and let $R \leq_c \Gamma$ be a subgroup containing $H$ as an open subgroup. Then $d(R) \leq d(H)$.

\end{proposition}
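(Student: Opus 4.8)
The plan is to apply the hereditarily linearly increasing rank hypothesis directly to $R$, exploiting only the monotonicity built into \eqref{DefLIREq} (the $d(H)$ term inside the maximum). Before doing so, I first need to know that $R$ is itself finitely generated, since the hypothesis is only asserted for finitely generated subgroups of $\Gamma$.

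First I would check that $R$ is finitely generated. Since $H$ is open in $R$, the index $[R:H]$ is finite; choosing a finite set of coset representatives for $H$ in $R$ together with a finite topological generating set for $H$, the closed subgroup they generate contains both $H$ and a transversal, hence equals $R$. Thus $d(R) \leq d(H) + [R:H] < \infty$, so $R \leq_c \Gamma$ is a finitely generated subgroup to which the definition applies.

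Next, since $\Gamma$ has hereditarily linearly increasing rank and $R$ is finitely generated, the defining property \eqref{DefLIREq} applies with $R$ in the role of the subgroup: there is an $\epsilon > 0$ so that every open $U \leq_o R$ satisfies $d(U) \geq \max\{d(R), \epsilon(d(R)-1)[R:U]\} \geq d(R)$. I would then specialize this to $U = H$, which is legitimate because $H$ is open in $R$, and read off $d(H) \geq d(R)$, i.e. $d(R) \leq d(H)$, as desired.

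There is essentially no hard step here; the only thing to be careful about is the \emph{direction} in which the definition is invoked. The monotonicity clause of \eqref{DefLIREq} says that passing to a finite-index subgroup cannot decrease the number of generators, so it is $R$ (the larger group) and not $H$ that must play the role of the ambient finitely generated subgroup, with $H$ as the open subgroup $U$. The only genuine verification is the preliminary finite generation of $R$, and even the value of $\epsilon$ is irrelevant to the conclusion.
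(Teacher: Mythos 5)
Your proof is correct and matches the paper's argument exactly: first establish that $R$ is finitely generated by combining a generating set of $H$ with a transversal, then apply the monotonicity clause of \eqref{DefLIREq} with $R$ as the ambient finitely generated subgroup and $H$ as the open subgroup to conclude $d(H) \geq d(R)$. No differences worth noting.
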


\begin{proof}

By taking the union of a finite generating set for $H$ with a transversal for $H$ in $R$ we get a finite generating set for $R$. It follows from \eqref{DefLIREq} that $d(H) \geq d(R)$ as required.

\end{proof}

\begin{proposition} \label{TorFreeProp}

Let $p$ be a prime number, and let $\Gamma$ be a pro-$p$ group with a hereditarily linearly increasing rank. Then $\Gamma$ is torsion-free.

\end{proposition}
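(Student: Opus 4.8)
The plan is to argue by contradiction, exploiting the fact that the hereditarily linearly increasing rank property forces $d(U) \geq d(H)$ for every open subgroup $U \leq_o H$ (this is the first term of the maximum in \eqref{DefLIREq}), and that this clause fails badly for a finite subgroup. Suppose $\Gamma$ contains a nontrivial torsion element $x$, say of finite order. First I would check that the closed subgroup $H \defeq \overline{\langle x \rangle}$ is in fact finite: since $x$ has finite order, the abstract cyclic group it generates is already finite, and a finite subgroup of a Hausdorff group is closed, so $H$ coincides with this finite cyclic group. In particular $H$ is a nontrivial finitely generated subgroup of $\Gamma$, whence $d(H) \geq 1$, and the hereditarily linearly increasing rank hypothesis applies to it.

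Next I would exhibit an open subgroup of $H$ violating \eqref{DefLIREq}. Because $H$ is finite, the subspace topology it inherits from $\Gamma$ is discrete, so every subgroup of $H$ is open in $H$; in particular the trivial subgroup $U \defeq 1$ satisfies $U \leq_o H$ with $[H : U] = |H| < \infty$. Feeding this $U$ into the defining inequality \eqref{DefLIREq}, and retaining only the first term of the maximum, yields $d(U) \geq d(H) \geq 1$. On the other hand $d(U) = d(1) = 0$, a contradiction. Hence $\Gamma$ has no nontrivial torsion element, i.e. it is torsion-free.

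The argument is essentially immediate once the set-up is in place, so there is no serious obstacle; the only points requiring care are the two topological observations. The first is that a torsion element generates a genuinely finite (closed) subgroup rather than an infinite procyclic one — this is precisely where finiteness of the order of $x$ is used. The second is that a finite subgroup of a profinite group is discrete, so that the trivial subgroup really is open in it and may legitimately be substituted into \eqref{DefLIREq}. Both facts are standard, and with them the monotonicity clause $d(U) \geq d(H)$ built into the definition of hereditarily linearly increasing rank does all the work.
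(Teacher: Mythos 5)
Your proof is correct and is essentially the paper's argument: the paper likewise takes a finite subgroup $C$, observes $\{1\} \leq_o C$, and applies the monotonicity clause of \eqref{DefLIREq} to get $0 = d(\{1\}) \geq d(C)$, forcing $C = \{1\}$. Your extra care in passing from a torsion element to a finite closed subgroup is a fine (if slightly more verbose) way to set this up.
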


\begin{proof}

Let $C \leq_c \Gamma$ be a finite subgroup. Since $\{1\} \leq_o C$, \eqref{DefLIREq} implies that $0 = d(\{ 1 \}) \geq d(C)$ which guarantees that $C = \{1\}$ as required. 

\end{proof}

\begin{corollary} \label{FreqFamCor}

Let $p$ be a prime number, let $\Gamma$ be a pro-$p$ group with a hereditarily linearly increasing rank, and let $H$ be a finitely generated subgroup of $\Gamma$. Then $\mathcal{F} \defeq \{R \leq_c \Gamma \ | \ H \leq_o R\}$ has a maximal element.

\end{corollary}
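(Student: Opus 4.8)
The plan is to reduce this statement to \thmref{FratThm} by producing a uniform bound on the number of generators of the members of $\mathcal{F}$. First I would check that the hypotheses of that theorem are met. The family is nonempty: since $H$ is an open subgroup of itself (the index being $1$), we have $H \in \mathcal{F}$, so $\mathcal{F} \neq \emptyset$.

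The crucial step is to exhibit $\mathcal{F}$ as a family of $n$-generated subgroups for a single fixed $n$. The natural choice is $n \defeq d(H)$. Indeed, every $R \in \mathcal{F}$ contains $H$ as an open subgroup by definition of $\mathcal{F}$, so \propref{UpWProp} gives $d(R) \leq d(H) = n$. This is the point at which the hereditarily linearly increasing rank hypothesis does its work, since \propref{UpWProp} is exactly the assertion that passing from $H$ to a supergroup in which $H$ sits openly cannot increase the number of generators. Consequently $\mathcal{F}$ is a nonempty family of $n$-generated subgroups of the pro-$p$ group $\Gamma$.

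With this in hand I would simply invoke \thmref{FratThm} applied to $\Gamma$, the integer $n$, and the family $\mathcal{F}$, to conclude that $\mathcal{F}$ has a maximal element with respect to inclusion.

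I do not expect a genuine obstacle here, as the substantive content has already been isolated in the earlier results: \propref{UpWProp} supplies the generator bound and \thmref{FratThm} supplies the existence of maximal elements among $n$-generated subgroups. The only subtlety worth stating explicitly is that the bound $d(R) \leq d(H)$ must hold for all members of $\mathcal{F}$ with one and the same value $n = d(H)$, so that \thmref{FratThm} genuinely applies; this uniformity is precisely what \propref{UpWProp} delivers.
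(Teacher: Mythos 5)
Your proof is correct and matches the paper's own argument, which simply cites \propref{UpWProp} and \thmref{FratThm}; you have merely spelled out the details (nonemptiness via $H \in \mathcal{F}$ and the uniform bound $d(R) \leq d(H)$) that the paper leaves implicit.
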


\begin{proof}

This is immediate from \propref{UpWProp} and \thmref{FratThm}.

\end{proof}

The following simple lemma is useful for constructing small generating sets of profinite groups. It remains true both for abstract groups and for profinite groups which are not finitely generated, but we do not need it in this generality. 

\begin{lemma} \label{GenCountLem}

Let $G$ be a finitely generated profinite group, let $K \lhd_c G$ be a normal subgroup, and let $H \leq_c G$ be a finitely generated subgroup containing $K$. Then $d(G) \leq d(H) + d(G/K).$

\end{lemma}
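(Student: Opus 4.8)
The plan is to build an explicit generating set for $G$ out of a generating set for $H$ together with lifts of a generating set for $G/K$, and to verify that it works by a standard ``$LK = G$ and $K \leq L$ imply $L = G$'' argument. Concretely, I would fix a minimal generating set $h_1, \dots, h_{d(H)}$ of $H$, fix a minimal generating set $\bar{g}_1, \dots, \bar{g}_{d(G/K)}$ of $G/K$, and choose lifts $g_1, \dots, g_{d(G/K)} \in G$ under the quotient map $\pi \colon G \to G/K$ (so that $\pi(g_i) = \bar{g}_i$). Let $L \leq_c G$ be the closed subgroup topologically generated by the combined collection $g_1, \dots, g_{d(G/K)}, h_1, \dots, h_{d(H)}$. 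The goal is then to show $L = G$, which immediately yields $d(G) \leq d(H) + d(G/K)$.

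To prove $L = G$, I would argue in two steps. First, since $K \leq H$ and $H \leq L$ by construction, we have $K \leq L$. Second, I would show $\pi(L) = G/K$: because $L$ is closed in the compact group $G$, it is compact, so $\pi(L)$ is compact and hence closed in $G/K$; as $\pi(L)$ contains each $\bar{g}_i$, it contains the closed subgroup they topologically generate, namely all of $G/K$. Combining these, $LK = G$ (since $\pi(L) = G/K$ means every coset of $K$ meets $L$), and as $K \leq L$ we conclude $LK = L$, whence $L = G$.

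The argument is essentially formal and I do not anticipate a serious obstacle; the only point requiring a little care is the topological bookkeeping, namely that ``generate'' is understood in the closed sense throughout, and that the continuous image $\pi(L)$ of the closed (hence compact) subgroup $L$ is again closed, so that containing a topological generating set of $G/K$ forces $\pi(L) = G/K$. I would also note at the end that all the generating sets involved are finite (since $G$, and hence $H$ and $G/K$, are finitely generated), so that the cardinality count $d(G) \leq d(H) + d(G/K)$ is meaningful and the constructed set is a legitimate finite topological generating set for $G$.
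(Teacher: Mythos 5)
Your proposal is correct and follows essentially the same route as the paper: take generating sets for $H$ and for $G/K$, lift the latter through the quotient map, and show the subgroup $L$ they generate satisfies $LK=G$ and $K\leq H\leq L$, hence $L=G$. The topological point you flag (that $\pi(L)$ is closed, so containing a topological generating set of $G/K$ forces surjectivity) is the same observation the paper uses implicitly when it concludes $\pi|_M$ is a surjection.
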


\begin{proof}

Let $\pi \colon G \to G/K$ be the quotient map, let $X$ be a finite generating set of $H$, and let $Y$ be a finite generating set of $G/K$. Since $\pi$ is surjective, there exists some $Z \subseteq G$ which is bijectively mapped by $\pi$ onto $Y$. Set $S \defeq X \cup Z, \ M \defeq \langle S \rangle$ and note that it suffices to show that $M = G$, since this guarantees the existence of a generating set (i.e. $S$) for $G$ of the required cardinality.

In order to see that $M=G$, note that $\pi(M) \supseteq \pi(S) \supseteq \pi(Z) = Y$ which generates $G/K$. Therefore, $\pi|_M$ is a surjection, which means that $MK = G$. Furthermore, $M \supseteq S \supseteq X$ which generates $H$ so $K \leq_c H \leq_c M$ and thus $M = MK = G$ as required. 

\end{proof}

\subsubsection{Faithful action}

We establish \thmref{SecondRes} in a sequence of claims, the most important of which is the following theorem that makes crucial use of the assumption on linear growth of generating sets for subgroups (i.e. positive rank gradient). For the proof, recall that if $U$ is an open subgroup of a finitely generated profinite group $\Gamma$, then $d(U) \leq d(\Gamma)[\Gamma : U]$ as can be seen from \cite[Corollary 3.6.3]{RZ}.

\begin{theorem} \label{FaithActThm}

Let $p$ be a prime number, let $\Gamma$ be a pro-$p$ group with a hereditarily linearly increasing rank, and let $H \leq_c \Gamma$ be a finitely generated subgroup of infinite index. Then the action of $\Gamma$ on $\Gamma/H$ is faithful.

\end{theorem}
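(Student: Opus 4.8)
The kernel of the action of $\Gamma$ on $\Gamma/H$ is the normal core $N:=\bigcap_{g\in\Gamma}gHg^{-1}$, a closed normal subgroup of $\Gamma$ contained in $H$; so the claim is exactly that $N=1$. The plan is to argue by contradiction: assume $N\neq1$ and fix $1\neq x\in N$. First I would reduce to the case that $\Gamma$ is finitely generated. Indeed, for any subgroup $L$ with $H\le L\le_c\Gamma$ one has $\ell^{-1}x\ell\in H$ for every $\ell\in L$ (as this already holds for all of $\Gamma$), so $x\in\mathrm{Core}_L(H)$; moreover every $L\le_c\Gamma$ inherits the hereditarily linearly increasing rank, since its finitely generated subgroups are finitely generated subgroups of $\Gamma$. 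Hence if some finitely generated $L\supseteq H$ has infinite index I may replace $\Gamma$ by $L$. If instead every finitely generated overgroup of $H$ contains $H$ with finite index, then by \corref{FreqFamCor} the family $\{R\le_c\Gamma\mid H\le_o R\}$ has a maximal element $R_0$ (finitely generated, as $d(R_0)\le d(H)$ by \propref{UpWProp}); adjoining any $g\in\Gamma$ keeps $H$ of finite index, so $\langle R_0,g\rangle\in\{R\mid H\le_o R\}$ and maximality forces $g\in R_0$, i.e. $R_0=\Gamma$ and $H\le_o\Gamma$ --- contradicting the infinite index. So from now on $\Gamma$ is finitely generated.

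Next I would pass to a maximal counterexample via \thmref{FratThm}. Put $n=d(H)$ and apply \thmref{FratThm} to $\{R\le_c\Gamma\mid H\le R,\ d(R)\le n,\ N\le R,\ [\Gamma:R]=\infty\}$ to obtain a maximal member, which I rename $H$; it still contains $N$ in its core. Maximality yields the rigidity statement that for every $g\notin H$ the index $[\langle H,g\rangle:H]$ is infinite: otherwise $H\le_o\langle H,g\rangle$, so $d(\langle H,g\rangle)\le d(H)=n$ by \propref{UpWProp} while $N\le\langle H,g\rangle$ and $[\Gamma:\langle H,g\rangle]=\infty$, placing $\langle H,g\rangle$ in the family and contradicting maximality. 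A companion observation, proved by iterating conjugation, is that no conjugate of $H$ is a proper finite-index subgroup of $H$: if $gHg^{-1}\lneq H$ with $[H:gHg^{-1}]=t>1$, then $g^kHg^{-k}\le_o H$ has index $t^k$, whence \eqref{DefLIREq} forces $d(g^kHg^{-k})\ge\epsilon(n-1)t^k\to\infty$ (for $n\ge2$), contradicting $d(g^kHg^{-k})=d(H)=n$. The procyclic case $n=1$, where $H\cong\mathbb{Z}_p$ and \eqref{DefLIREq} gives no growth, I would dispose of separately using that a nontrivial closed subgroup of $\mathbb{Z}_p$ is open, so $N\le_o H$ and $H\le\mathrm{Comm}_\Gamma(N)$.

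The crux is to convert $N\neq1$ into an outright contradiction, and this is the hard part. The two facts above reduce matters to a rigidity/normality phenomenon: every conjugate of $H$ contains the nontrivial normal subgroup $N$ (so conjugates intersect nontrivially), yet $H$ cannot be enlarged within the finite-index regime. I would first split off the case $N_\Gamma(H)\supsetneq H$: any $g\in N_\Gamma(H)\setminus H$ gives $H\lhd\langle H,g\rangle$ with infinite index, exhibiting $H$ as a finitely generated \emph{normal} subgroup of infinite index in a finitely generated subgroup of $\Gamma$ (which again has the rank property). Thus the whole theorem rests on the normal case: a finitely generated normal subgroup of infinite index in such a $\Gamma$ must be trivial --- the pro-$p$ analogue of the classical fact that infinite-index normal subgroups of free groups are not finitely generated.

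To attack the normal case I would run a two-sided count over a tower $V_k\lhd_o\Gamma$ with $\bigcap_kV_k=1$. Writing $U_k=H\cap V_k\le_o H$ and $m_k=[H:U_k]\to\infty$, the positive rank gradient built into \eqref{DefLIREq} gives the lower bound $d(U_k)\ge\epsilon(n-1)m_k$, while the Schreier-type bound $d(W)\le d(\Gamma)[\Gamma:W]$ for open $W\le_o\Gamma$, together with \lemref{GenCountLem} applied to the normal inclusion $U_k\lhd V_k$ (or $U_k\lhd HV_k$), controls the generator counts of the surrounding open subgroups. The goal is to show that the presence of the $\Gamma$-invariant subgroup $N$ inside $H$ forces $d(U_k)$ to grow strictly slower than $\epsilon(n-1)m_k$ once $[\Gamma:H]=\infty$, contradicting the lower bound. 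The main obstacle is precisely extracting this sub-linear upper bound: unlike in the free case, $N$ need not be finitely generated, so one cannot simply factor it out through \lemref{GenCountLem}; making the $\Gamma$-invariance of $N$ and the infinitude of $[\Gamma:H]$ interact with \eqref{DefLIREq} to beat the linear lower bound is the heart of the matter.
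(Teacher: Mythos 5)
Your proposal has a genuine gap, and you flag it yourself: the step you call ``the heart of the matter'' --- extracting a sub-linear upper bound on generator numbers from the presence of the nontrivial core --- is exactly the step you do not supply, and the reductions you do carry out (maximal counterexample via \thmref{FratThm}, the no-proper-finite-index-conjugate observation, the reduction to the normal case) are not the ones that make the argument close. Indeed the normal-case reduction does not even cover all cases: if $N_\Gamma(H)=H$ you have no normal finitely generated subgroup to feed into your purported ``normal case'' statement, since the core itself need not be finitely generated. The paper's proof needs neither maximality of $H$ nor normality: writing $K$ for the kernel, it takes the maximal overgroup $M$ with $H\leq_o M$ from \corref{FreqFamCor}, adjoins a single $x\in\Gamma\setminus M$ to get a finitely generated $N\defeq\langle M\cup\{x\}\rangle$ with $[N:H]=\infty$ and $d(N)\leq d(H)+1$, and then applies \eqref{DefLIREq} to the open subgroups of $N$ (not of $H$): $d(V)\geq\delta[N:V]$ for all $V\leq_o N$, where $\delta=\epsilon(d(N)-1)>0$ and $d(N)>1$ because $K$ is a nontrivial non-open subgroup of $N$. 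This also quietly disposes of your troublesome $n=1$ case.

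The upper bound you are missing comes from \lemref{GenCountLem} applied inside an open normal $U\lhd_o N$, with $U\cap K$ as the normal subgroup and $U\cap H$ as the finitely generated intermediate subgroup. Your worry that ``$N$ need not be finitely generated, so one cannot simply factor it out'' misreads the lemma: it is the intermediate subgroup, not the normal one, that must be finitely generated, and $U\cap H\leq_o H$ is. One gets $d(U)\leq d(U\cap H)+d(UK/K)\leq d(H)[UH:U]+d(N)[N:UK]$, i.e. at most $2(d(H)+1)[N:U]/\min\{[UK:U],[N:UH]\}$. Since $[N:H]=\infty$ and $K$ is infinite (by torsion-freeness, \propref{TorFreeProp}), one can choose a single $U\lhd_o N$ with both $[N:UH]$ and $[UK:U]$ exceeding $2(d(H)+1)/\delta$, making this $<\delta[N:U]$ and contradicting the lower bound. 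The essential idea absent from your sketch is this interplay: the two quantities $[N:UH]$ and $[UK:U]$ must be made large simultaneously in one open subgroup, and the generator count of $U$ must be split by \lemref{GenCountLem} into a piece controlled by each. Your scheme of bounding $d(H\cap V_k)$ below via \eqref{DefLIREq} applied to $H$ and hoping the $\Gamma$-invariance of the core caps it above is not completed, and there is no indication it can be.
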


\begin{proof}

Let $K \lhd_c \Gamma$ be the kernel of the action, and note that $K \leq_c H$. Towards a contradiction, suppose that $K \neq \{1\}$. By \corref{FreqFamCor}, we can find some $M \leq_c \Gamma$ maximal among those having $H$ as an open subgroup. Since $[M : H] < \infty$ and $[\Gamma : H] = \infty$ by assumption, we can pick some $x \in \Gamma \setminus M$. Set $N \defeq \langle M \cup \{x\} \rangle$, so that $[N : H] = \infty$ by the maximality of $M$. We see that 
\begin{equation} \label{NH1Eq}
d(N) \leq d(M) + 1 \stackrel{\ref{UpWProp}}{\leq} d(H) + 1
\end{equation}
so $N$ is finitely generated, and from the fact that $K$ is a nontrivial nonopen subgroup of $N$, we infer that $d(N) > 1$ (if $d(N) \leq 1$ then all nontrivial subgroups of the pro-$p$ group $N$ are open).

By \eqref{DefLIREq}, there exists an $\epsilon > 0$ such that for all $V \leq_o N$ we have

\begin{equation} \label{VrakIn0Eq}
d(V) \geq \epsilon(d(N)-1)[N : V] = \delta[N : V]
\end{equation}
where $\delta \defeq \epsilon(d(N)-1) > 0$.  By \propref{TorFreeProp}, $K$ is infinite, and this fact (along with the fact that $[N : H] = \infty$) is seen in the finite quotients of $N$. For instance, there exists some $U \lhd_o N$ such that
\begin{equation} \label{FinImageIn0Eq}
[N : UH], [UK : U] > \frac{2(d(H)+1)}{\delta}.
\end{equation}
We find that
\begin{equation}
\begin{split}
d(U) &\stackrel{\ref{GenCountLem}}{\leq} d(U / U \cap K) + d(U \cap H) \\
&= d(UK/K) + d(U \cap H) \\
&\leq d(UK) + d(H)[H : U \cap H] \\
&\leq d(N)[N : UK] + d(H)[UH : U] \\
&\stackrel{\ref{NH1Eq}}{\leq} (d(H)+1) \frac{[N : U]}{[UK : U]} + d(H) \frac{[N : U]}{[N : UH]} \\
&\leq \frac{2(d(H)+1)[N : U]}{\min\{[UK : U],[N : UH]\}} \stackrel{\ref{FinImageIn0Eq}}{<} \delta[N : U]
\end{split}
\end{equation}
which is a contradiction to \eqref{VrakIn0Eq}.

\end{proof}

For the next corollary, recall that given a subgroup $H$ of a profinite group $\Gamma$, the normalizer of $H$ in $\Gamma$ (the set of $\gamma \in \Gamma$ for which $\gamma H = H\gamma$) is denoted by $N_{\Gamma}(H)$. The normalizer is easily seen to be a subgroup.

\begin{corollary} \label{NormalizerCor}

Let $p$ be a prime number, let $\Gamma$ be a pro-$p$ group with a hereditarily linearly increasing rank, and let $H \neq \{1\}$ be a finitely generated subgroup of $\Gamma$. Then $[N_{\Gamma}(H) : H] < \infty$.

\end{corollary}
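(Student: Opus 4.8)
The plan is to deduce this from \thmref{FaithActThm} by taking the normalizer itself as the ambient group. Write $N \defeq N_{\Gamma}(H)$, which is a closed subgroup of $\Gamma$, and observe first that $N$ inherits the hereditarily linearly increasing rank: the defining condition \eqref{DefLIREq} quantifies only over finitely generated subgroups and their open subgroups, and for a finitely generated $K \leq_c N$ both the minimal number of generators $d(K)$ and the lattice of open subgroups of $K$ are intrinsic to $K$ and do not depend on whether $K$ is viewed inside $N$ or inside $\Gamma$. Hence any $\epsilon$ witnessing \eqref{DefLIREq} for $K$ inside $\Gamma$ witnesses it for $K$ inside $N$.

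Next I would run a dichotomy on $[N : H]$. If $[N : H] < \infty$ there is nothing to prove, so suppose towards a contradiction that $[N : H] = \infty$. Since $H$ is finitely generated and $N$ is a pro-$p$ group with a hereditarily linearly increasing rank, \thmref{FaithActThm}, applied to $N$ in the role of $\Gamma$, tells us that the action of $N$ on the coset space $N / H$ is faithful, i.e. the kernel of this action is trivial.

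Finally, I would identify this kernel. The kernel of the action of $N$ on $N / H$ is the normal core $\bigcap_{g \in N} gHg^{-1}$ of $H$ in $N$. But $H$ is by definition normal in $N = N_{\Gamma}(H)$, so $gHg^{-1} = H$ for every $g \in N$ and the normal core equals $H$ itself. Faithfulness therefore forces $H = \{1\}$, contradicting the hypothesis $H \neq \{1\}$. This contradiction shows that $[N : H] < \infty$.

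The only point requiring any care is the inheritance of the hereditarily linearly increasing rank by $N$, and as noted this is immediate from the intrinsic nature of the quantities appearing in \eqref{DefLIREq}; once that observation is in place the corollary is a direct instance of \thmref{FaithActThm} with the subgroup chosen to coincide with the kernel of its own normalizer action.
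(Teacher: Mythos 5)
Your proposal is correct and is essentially identical to the paper's own argument: both apply \thmref{FaithActThm} with $N_{\Gamma}(H)$ as the ambient group (noting that the hereditarily linearly increasing rank property passes to subgroups) and identify the kernel of the action on $N_{\Gamma}(H)/H$ with the normal core of $H$, which equals $H$ by normality, forcing $H=\{1\}$. Your extra remark justifying the inheritance of the property by subgroups is a welcome elaboration of a step the paper states without proof.
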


\begin{proof}

Suppose that $[N_{\Gamma}(H) : H] = \infty$. Since hereditarily linearly increasing rank is inherited by subgroups, we can apply \thmref{FaithActThm} to the action of $N_{\Gamma}(H)$ on its cosets modulo $H$, and get that
\begin{equation} \label{ContrEq}
H = \bigcap_{g \in N_{\Gamma}(H)} gHg^{-1} = \{1\}
\end{equation}
where the first equality stems from the normality of $H$ in $N_{\Gamma}(H)$, and the second one from the faithfulness of the action of $N_{\Gamma}(H)$ on $N_{\Gamma}(H)/H$. Clearly, \eqref{ContrEq} contradicts our assumption that $H \neq\{1\}$.

\end{proof}

\subsubsection{Roots and Commensurators}

Given a subgroup $H$ of a profinite group $\Gamma$, we define the family of 'finite extensions' of $H$ by $\mathcal{F} \defeq \{R \leq_c \Gamma \ | \ H \leq_o R\}.$ Following \cite{R}, we say that $M \in \mathcal{F}$ is the root of $H$ (and write $M = \sqrt{H}$) if $M$ is the greatest element in $\mathcal{F}$ with respect to inclusion. Note that $\mathcal{F}$ may fail to have a greatest element, so $H$ does not necessarily have a root.

\begin{theorem} \label{RootThm}

Let $p$ be a prime number, and let $\Gamma$ be a pro-$p$ group with a hereditarily linearly increasing rank. Then every finitely generated subgroup of $\Gamma$ has a root. 

\end{theorem}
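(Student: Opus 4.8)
The root of $H$ is by definition the greatest element of the family $\mathcal{F} \defeq \{R \leq_c \Gamma \mid H \leq_o R\}$, so the plan is to produce such a greatest element. If $H = \{1\}$, then by \propref{TorFreeProp} every $R \in \mathcal{F}$ is finite and hence trivial, so $\sqrt{H} = \{1\}$; thus I may assume $H \neq \{1\}$. By \propref{UpWProp} each $R \in \mathcal{F}$ satisfies $d(R) \leq d(H)$, so $\mathcal{F}$ is a family of $d(H)$-generated subgroups. Applying \thmref{FratThm} to the nonempty subfamily $\{R \in \mathcal{F} \mid R \supseteq R_0\}$ for each $R_0 \in \mathcal{F}$ shows that every element of $\mathcal{F}$ lies below a maximal element of $\mathcal{F}$. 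Consequently, it suffices to prove that the maximal element is unique, equivalently that $\mathcal{F}$ is directed: given $M_1, M_2 \in \mathcal{F}$ I must show $\langle M_1 \cup M_2 \rangle \in \mathcal{F}$.

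First I would describe the maximal elements. If $M$ is maximal in $\mathcal{F}$, put $H_0 \defeq \bigcap_{m \in M} m H m^{-1}$, the normal core of $H$ in $M$; since $[M : H] < \infty$ this is a finite intersection, so $H_0 \leq_o M$, and \propref{TorFreeProp} gives $H_0 \neq \{1\}$, while $H_0$ is finitely generated as an open subgroup of $M$. Now $M \leq N_{\Gamma}(H_0)$, and \corref{NormalizerCor} applied to $H_0$ yields $[N_{\Gamma}(H_0) : H_0] < \infty$, whence $N_{\Gamma}(H_0) \in \mathcal{F}$; maximality of $M$ then forces $M = N_{\Gamma}(H_0)$. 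Thus every maximal element is the full normalizer of its (nontrivial, finitely generated) core, and two maximal elements coincide as soon as their cores do.

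To establish directedness I would argue by contradiction. Suppose $M_1 \neq M_2$ are maximal elements and set $L \defeq \langle M_1 \cup M_2 \rangle$. Then $L \supsetneq M_1$, so $L \notin \mathcal{F}$ by maximality, i.e. $[L : H] = \infty$; as $d(L) \leq d(M_1) + d(M_2) \leq 2 d(H)$, the group $L$ is finitely generated, so \thmref{FaithActThm} applies and the action of $L$ on $L/H$ is faithful, that is $\bigcap_{g \in L} g H g^{-1} = \{1\}$. On the other hand, the cores $H_i \defeq \bigcap_{m \in M_i} m H m^{-1}$ are nontrivial and open in $H$, and a short computation shows that every element of $M_1 \cup M_2$ lies in $\mathrm{Comm}_{\Gamma}(H)$ (for $g \in R \in \mathcal{F}$ one has $g H g^{-1} \leq_o R$, hence $H \cap g H g^{-1} \leq_o H$), so all the conjugates $g H_i g^{-1}$ with $g \in L$ are commensurable with $H$.

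I expect the crux, and the main obstacle, to be the passage from this configuration to a contradiction with \eqref{DefLIREq}. The naive attempt -- intersecting or alternately coring $H_1$ and $H_2$ to build a nontrivial subgroup of $H$ normal in all of $L$ -- is doomed, precisely because the faithfulness supplied by \thmref{FaithActThm} already guarantees that no such nontrivial normal subgroup exists; moreover $[L : M_i] = \infty$, so the $L$-conjugates of $H_i$ are infinite in number and their total intersection is trivial. The contradiction must therefore be quantitative, in the spirit of the proof of \thmref{FaithActThm}: I would choose an open normal subgroup $U \lhd_o L$ with suitable large indices, bound $d(U)$ from above via \lemref{GenCountLem} together with the estimate $d(V) \leq d(L)[L:V]$ for $V \leq_o L$, and derive $d(U) < \epsilon(d(L)-1)[L:U]$ in violation of \eqref{DefLIREq}. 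The delicate point is that, unlike in \thmref{FaithActThm}, there is no honest normal subgroup of $H$ available to feed into \lemref{GenCountLem}; the estimate has to be driven by the commensurable but only almost-normal cores $H_i$, and making the generator count go through with these in place of a genuine normal subgroup is where the real work lies.
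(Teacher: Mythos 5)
Your reduction is sound as far as it goes: the trivial case, the bound $d(R)\leq d(H)$ via \propref{UpWProp}, the existence of maximal elements of $\mathcal{F}$ above any given one via \thmref{FratThm}, and the identification of each maximal $M$ with $N_{\Gamma}(H_0)$ for its core $H_0$ are all correct. But the proof has a genuine gap exactly where you say it does: nothing is actually proved about two distinct maximal elements $M_1\neq M_2$. The quantitative strategy you sketch (bounding $d(U)$ for a well-chosen $U\lhd_o L$ via \lemref{GenCountLem}) cannot be run as stated, because \lemref{GenCountLem} requires a genuine normal subgroup of $U$ contained in a finitely generated subgroup, and the cores $H_i$ are only normal in $M_i$, not in $L$; indeed \thmref{FaithActThm} guarantees that no nontrivial subgroup of $H$ is normal in $L$, so there is no candidate to feed into the lemma. (A smaller issue: elements of the closed subgroup $L=\langle M_1\cup M_2\rangle$ need not lie in the abstract subgroup generated by $M_1\cup M_2$, so even the claim that every $g\in L$ commensurates $H$ is not justified.) As written, the argument establishes a plausible setup and then stops at the crux.

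The paper escapes this difficulty by not attacking directedness of $\mathcal{F}$ for a fixed $H$ at all. It instead applies \thmref{FratThm} to the family $\mathcal{D}$ of $n$-generated subgroups \emph{without} roots and takes a maximal counterexample $M$. Given a maximal finite overgroup $T$ of $M$ and some $A\nleq T$ with $M\leq_o A$, one first shows $T\cap A=M$ (because $T\cap A$ would otherwise be a larger member of $\mathcal{D}$), and then passes to \emph{minimal} strict open overgroups $A_0,T_0$ of $M$ inside $A,T$. The decisive point is that $M$ is a maximal subgroup of the pro-$p$ groups $A_0,T_0$, hence genuinely normal in each, so $J\defeq\langle A_0\cup T_0\rangle\leq N_{\Gamma}(M)$ and \corref{NormalizerCor} — an honest normalizer statement already in hand — gives $[J:M]<\infty$; the root of $T_0$, which exists by maximality of $M$ in $\mathcal{D}$, then strictly exceeds $T$, a contradiction. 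This Noetherian-induction device is precisely what replaces the ``almost-normal core'' estimate you could not complete, and it is the missing idea in your proposal.
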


\begin{proof}

Let $n \in \mathbb{N}$, and let $\mathcal{D}$ be the family of all $n$-generated subgroups of $\Gamma$ which do not have a root. Towards a contradiction, suppose that $\mathcal{D} \neq \emptyset$. By \thmref{FratThm}, there is a maximal $M \in \mathcal{D}$. By \corref{FreqFamCor}, there exists some $T$ maximal among the subgroups of $\Gamma$ having $M$ as an open subgroup. Since $M \in \mathcal{D}$, we know that $T$ is not a root of $M$, so there exists an $A \leq_c \Gamma$ not contained in $T$, such that $M \leq_o A$. As $M \leq T$ and $A \nleq T$, we conclude that $M \lneq A$. The maximality of $T$ implies that $T \nleq A$ so $M \lneq T$. By \propref{TorFreeProp}, $M \neq \{1\}$ since otherwise $T$ would have been a nontrivial finite subgroup of $\Gamma$.

Set $B \defeq T \cap A$ and suppose that there exists a root $C$ of $B$. Since $M \leq_o B$ we see that $B \leq_o A,T$ so $A,T \leq_c C$ by definition of the root. Since $A \nleq T$, we find that $T \lneq C$. On the other hand, $C = \sqrt{B}$ implies that $B \leq_o C$, and thus $M \leq_o C$ which is a contradiction to the maximality of $T$. We conclude that $B$ does not have a root, and by \propref{UpWProp}, $d(B) \leq d(M) \leq n$. Hence, $B \in \mathcal{D}$ and the maximality of $M$ implies that $B = M$.

Let $A_0, T_0 \gneq_o M$ be minimal subgroups of $A,T$ respectively, and set $J \defeq \langle A_0 \cup T_0 \rangle$. Evidently, $M$ is a maximal subgroup of the pro-$p$ groups $A_0,T_0$ so by \cite[Lemma 2.8.7 (a)]{RZ}, $M \lhd_o A_0,T_0$ which means that $A_0,T_0 \leq_c N_{\Gamma}(M)$. Consequently, $J \leq_c N_{\Gamma}(M)$, so from \corref{NormalizerCor} we get that $[J : M] < \infty$. By \propref{UpWProp}, $d(T_0) \leq d(M) \leq n$ so from the maximality of $M$ we infer that $T_0$ has a root $R$. Since $T_0 \leq_o T,J$ we have $T,J \leq_c \sqrt{T_0} = R$. The fact that $T \cap A = M$ implies that $A_0 \nleq T$, so $T \lneq R$ as $A_0 \leq_o J \leq_o R$. Since $R = \sqrt{T_0}$ we have $M \leq_o T_0 \leq_o R$ so $R$ contains $M$ as an open subgroup, thus contradicting the maximality of $T$.

\end{proof}

The following assertion holds whenever roots exist.

\begin{proposition} \label{EqRootProp} 

Let $p$ be a prime number, let $\Gamma$ be a pro-$p$ group with a hereditarily linearly increasing rank, let $H$ be a finitely generated subgroup of $\Gamma$, and let $K$ be an open subgroup of $H$. Then $\sqrt{H} = \sqrt{K}$.

\end{proposition}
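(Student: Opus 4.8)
The plan is to prove the equality $\sqrt{H} = \sqrt{K}$ by two inclusions, exploiting the fact that a root, when it exists, is not merely maximal but the \emph{greatest} element of the family of finite extensions. First I would record the preliminary observation that $\sqrt{K}$ is even well-defined: since $K \leq_o H$ and $H$ is finitely generated, $K$ is a finite-index subgroup of a finitely generated pro-$p$ group, hence finitely generated, so \thmref{RootThm} guarantees that $\sqrt{K}$ exists (as does $\sqrt{H}$). Throughout I would use that $\leq_o$ is transitive and multiplicative on indices: for closed subgroups $A \leq B \leq C$ one has $[C : A] = [C : B][B : A]$, so that $A \leq_o B \leq_o C$ forces $A \leq_o C$, and $A \leq_o C$ forces $B \leq_o C$.

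For the inclusion $\sqrt{H} \leq \sqrt{K}$, I would start from $H \leq_o \sqrt{H}$ (by definition of the root) and combine it with the hypothesis $K \leq_o H$ to obtain $K \leq_o \sqrt{H}$ by transitivity. Thus $\sqrt{H}$ lies in the family $\mathcal{F}_K \defeq \{R \leq_c \Gamma \ | \ K \leq_o R\}$ of finite extensions of $K$, of which $\sqrt{K}$ is by definition the greatest element; hence $\sqrt{H} \leq \sqrt{K}$.

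For the reverse inclusion $\sqrt{K} \leq \sqrt{H}$, I would first note that $H$ itself belongs to $\mathcal{F}_K$ (again because $K \leq_o H$), so $H \leq \sqrt{K}$ since $\sqrt{K}$ is the greatest such subgroup. Now the chain $K \leq H \leq \sqrt{K}$ together with $K \leq_o \sqrt{K}$ gives $[\sqrt{K} : H] \leq [\sqrt{K} : K] < \infty$, i.e. $H \leq_o \sqrt{K}$. Therefore $\sqrt{K}$ is a finite extension of $H$, and since $\sqrt{H}$ is the greatest such we conclude $\sqrt{K} \leq \sqrt{H}$. Combining the two inclusions yields $\sqrt{H} = \sqrt{K}$.

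I do not expect a serious obstacle here. The only points requiring care are the bookkeeping of indices — ensuring that openness is genuinely preserved along the various chains of subgroups — and the explicit reliance on the greatest-element property of roots rather than mere maximality. It is precisely this property, secured by \thmref{RootThm}, that lets one slot each root into the finite-extension family of the other and thereby compare them in both directions with no further work.
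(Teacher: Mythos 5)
Your argument is correct and is essentially the paper's own proof: both directions are obtained by placing each root in the family of finite extensions of the other group and invoking the greatest-element property, exactly as in the paper (you simply spell out the index bookkeeping and the existence of $\sqrt{K}$ in more detail).
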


\begin{proof}

Since $K \leq_o H$ we have $H \leq_o \sqrt{K}$ so $\sqrt{K} \leq_o \sqrt{H}$. On the other hand, $K \leq_o H \leq_o \sqrt{H}$ so $K \leq_o \sqrt{H}$ and thus $\sqrt{H} \leq_o \sqrt{K}$.

\end{proof}

More generally, we have the following.

\begin{corollary} \label{EqRootCor} 

Let $p$ be a prime number, let $\Gamma$ be a pro-$p$ group with a hereditarily linearly increasing rank, and let $H,K$ be finitely generated subgroups of $\Gamma$. Then $H$ and $K$ are commensurable if and only if $\sqrt{H} = \sqrt{K}$.

\end{corollary}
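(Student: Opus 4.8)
The plan is to derive both implications from \propref{EqRootProp}, which already handles the case in which one group is open in the other; the only work is to reduce the general situation to that case by passing through the intersection $H \cap K$.

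For the forward implication, suppose $H$ and $K$ are commensurable, and set $L \defeq H \cap K$. By the definition of commensurability, $L \leq_o H$ and $L \leq_o K$. Since $H$ is finitely generated and $L$ is open in $H$, the subgroup $L$ is itself finitely generated (open subgroups of finitely generated profinite groups are finitely generated), so $\sqrt{L}$ exists by \thmref{RootThm}. Applying \propref{EqRootProp} to the pair $(H,L)$ gives $\sqrt{H} = \sqrt{L}$, and applying it to the pair $(K,L)$ gives $\sqrt{K} = \sqrt{L}$; hence $\sqrt{H} = \sqrt{K}$.

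For the reverse implication, suppose $\sqrt{H} = \sqrt{K}$ and denote this common root by $M$. Then $H \leq_o M$ and $K \leq_o M$ by the definition of the root, so $H \cap K$, being the intersection of two open subgroups of $M$, is open in $M$. In particular $H \cap K$ has finite index in $M$, hence finite index in each of $H$ and $K$; being closed of finite index, it is open in each. Therefore $H \cap K \leq_o H$ and $H \cap K \leq_o K$, which is precisely the assertion that $H$ and $K$ are commensurable.

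There is essentially no hard step here: once roots are known to exist and to be preserved under passage to open subgroups, the corollary is a formal manipulation. The only points requiring a moment of care are verifying that $H \cap K$ is finitely generated, so that \propref{EqRootProp} and \thmref{RootThm} genuinely apply to it, and that finite index together with closedness yields openness in the profinite setting.
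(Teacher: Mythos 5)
Your proof is correct and follows essentially the same route as the paper: the direction from commensurability to equality of roots is obtained by applying \propref{EqRootProp} twice through $H \cap K$, and the converse is the observation that $H \cap K$ is open (equivalently, of finite index) in the common root $\sqrt{H}=\sqrt{K}$, hence in both $H$ and $K$. The paper phrases the latter as the index bound $[H : H \cap K] \leq [L : H \cap K] \leq [L:H][L:K] < \infty$, but this is the same argument.
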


\begin{proof}

Suppose first that $\sqrt{H} = \sqrt{K}$ and denote by $L$ this common root. We have
\begin{equation}
[H : H \cap K], [K : H \cap K] \leq [L : H \cap K] \leq [L : H][L : K]
\end{equation}
where the right hand side is finite since $L = \sqrt{H} = \sqrt{K}$ so we have established commensurability. Conversely, if $H$ and $K$ are commensurable then 
\begin{equation}
\sqrt{H} \stackrel{\ref{EqRootProp}}{=} \sqrt{H \cap K} \stackrel{\ref{EqRootProp}}{=} \sqrt{K}.
\end{equation}

\end{proof}

Recall that given a subgroup $H$ of a profinite group $\Gamma$, the commensurator of $H$ in $\Gamma$ (the set of $\gamma \in \Gamma$ for which $H$ and $\gamma H \gamma^{-1}$ are commensurable) is denoted by $\mathrm{Comm}_{\Gamma}(H)$. Commensurability is an equivalence relation on subgroups, and the commensurator is an abstract subgroup of $\Gamma$. 

\begin{corollary} \label{FinalProfCor}

Let $p$ be a prime number, let $\Gamma$ be a pro-$p$ group with a hereditarily linearly increasing rank, and let $H \neq\{1\}$ be a finitely generated subgroup of $\Gamma$. Then $\mathrm{Comm}_{\Gamma}(H) = \sqrt{H}$.

\end{corollary}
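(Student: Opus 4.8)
The plan is to prove the two inclusions separately, using the root machinery developed above together with \corref{NormalizerCor}.

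For the inclusion $\sqrt{H} \subseteq \mathrm{Comm}_{\Gamma}(H)$ I would argue directly. Any $g \in \sqrt{H}$ normalizes $\sqrt{H}$, so conjugation by $g$ carries the open subgroup $H \leq_o \sqrt{H}$ to the open subgroup $gHg^{-1} \leq_o \sqrt{H}$. Two open subgroups of the same profinite group are automatically commensurable, since their intersection is open and hence of finite index in each; thus $H$ and $gHg^{-1}$ are commensurable and $g \in \mathrm{Comm}_{\Gamma}(H)$.

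For the reverse inclusion $\mathrm{Comm}_{\Gamma}(H) \subseteq \sqrt{H}$, I would first record the behavior of roots under conjugation. Conjugation by any $g \in \Gamma$ is a topological automorphism of $\Gamma$, so it maps the family $\{R \leq_c \Gamma \ | \ H \leq_o R\}$ bijectively and inclusion-preservingly onto $\{R \leq_c \Gamma \ | \ gHg^{-1} \leq_o R\}$; it therefore sends the greatest element of the first family to the greatest element of the second, giving $\sqrt{gHg^{-1}} = g\sqrt{H}g^{-1}$. Now if $g \in \mathrm{Comm}_{\Gamma}(H)$, then $H$ and $gHg^{-1}$ are commensurable, so by \corref{EqRootCor} we get $\sqrt{H} = \sqrt{gHg^{-1}} = g\sqrt{H}g^{-1}$, which says exactly that $g \in N_{\Gamma}(\sqrt{H})$.

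It remains to upgrade ``$g$ normalizes $\sqrt{H}$'' to ``$g$ lies in $\sqrt{H}$,'' and this is the crux of the argument. By \propref{UpWProp} applied to $H \leq_o \sqrt{H}$ we have $d(\sqrt{H}) \leq d(H) < \infty$, and $\sqrt{H} \neq \{1\}$ because $H \neq \{1\}$; hence \corref{NormalizerCor} applies to the finitely generated nontrivial subgroup $\sqrt{H}$ and yields $\sqrt{H} \leq_o N_{\Gamma}(\sqrt{H})$. Chaining $H \leq_o \sqrt{H} \leq_o N_{\Gamma}(\sqrt{H})$ shows that $H$ is open in $N_{\Gamma}(\sqrt{H})$; but $\sqrt{H}$ is by definition the greatest subgroup of $\Gamma$ containing $H$ as an open subgroup, which forces $N_{\Gamma}(\sqrt{H}) \leq \sqrt{H}$ and hence $N_{\Gamma}(\sqrt{H}) = \sqrt{H}$. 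Therefore $g \in N_{\Gamma}(\sqrt{H}) = \sqrt{H}$, as desired. The main obstacle is precisely this final step: the naive commensurability computation only places $g$ in the normalizer of $\sqrt{H}$, and it is the finiteness of $[N_{\Gamma}(\sqrt{H}) : \sqrt{H}]$ from \corref{NormalizerCor}, combined with the maximality defining the root, that pins $g$ down inside $\sqrt{H}$ itself.
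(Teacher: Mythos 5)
Your proposal is correct and follows essentially the same route as the paper: the forward inclusion via openness of $H$ and $gHg^{-1}$ in $\sqrt{H}$, the reverse inclusion via \corref{EqRootCor} and the conjugation formula for roots to land in $N_{\Gamma}(\sqrt{H})$, and then \propref{UpWProp}, \corref{NormalizerCor}, and the maximality of the root to conclude $N_{\Gamma}(\sqrt{H}) = \sqrt{H}$. The only cosmetic difference is that the paper also deduces the forward inclusion from \corref{EqRootCor} rather than from the direct observation that two open subgroups of a profinite group are commensurable.
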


\begin{proof}

Let $g \in \sqrt{H}$. We have 
\begin{equation}
\sqrt{gHg^{-1}} = g\sqrt{H}g^{-1} = \sqrt{H}
\end{equation}
so by \corref{EqRootCor}, $H$ and $gHg^{-1}$ are commensurable, which means that $g \in \mathrm{Comm}_{\Gamma}(H)$. Now, let $x \in \mathrm{Comm}_{\Gamma}(H)$. Thus, $H$ and $xHx^{-1}$ are commensurable. By \corref{EqRootCor},
\begin{equation}
\sqrt{H} = \sqrt{xHx^{-1}} = x\sqrt{H}x^{-1} 
\end{equation} 
so $x \in N_{\Gamma}(\sqrt{H})$. We have thus shown that $\mathrm{Comm}_{\Gamma}(H) \leq N_{\Gamma}(\sqrt{H})$. Since $H \leq_o \sqrt{H}$ \propref{UpWProp} tells us that $\sqrt{H}$ is finitely generated, so by \corref{NormalizerCor}, $[N_{\Gamma}(\sqrt{H}) : \sqrt{H}] < \infty$ which implies that $N_{\Gamma}(\sqrt{H}) = \sqrt{H}$ in view of the maximality of the root $\sqrt{H}$. Hence, $\mathrm{Comm}_{\Gamma}(H) \leq \sqrt{H}$ and we have an equality.

\end{proof}

Observe that \thmref{SecondRes} follows at once from \corref{FinalProfCor} and \thmref{FaithActThm}.

\section{Abstract Groups}

For a group $G$, we denote by $\widehat{G}$ the profinite completion of $G$.

\begin{corollary} \label{ProfRankBoundCor}

Let $G$ be a group, let $n \in \mathbb{N}$, let $\{A_i\}_{i \in I}$ be a directed family of $n$-generated subgroups, and set $A \defeq \langle A_i \rangle_{i \in I}$. Then $d(\widehat{A}) \leq n$.

\end{corollary}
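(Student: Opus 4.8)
The plan is to reduce the statement to the fact, recorded in \cite[Lemma 2.5.3]{RZ} and already used in \corref{BasicProfCor}, that the minimal number of (topological) generators of a profinite group is determined by its finite quotients. Concretely, it suffices to show that every finite quotient of $\widehat{A}$ can be generated by $n$ elements. By the universal property of the profinite completion, the finite continuous quotients of $\widehat{A}$ are precisely the finite quotients of the abstract group $A$ (a continuous surjection $\widehat{A} \to Q$ onto a finite $Q$ restricts to a surjection $A \to Q$, and conversely every finite quotient of $A$ factors through $\widehat{A}$). So I would instead fix an arbitrary surjection $\tau \colon A \to Q$ onto a finite group $Q$ and aim to prove $d(Q) \leq n$.

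The key structural input is that, since $\{A_i\}_{i \in I}$ is directed, the abstract subgroup $A = \langle A_i \rangle_{i \in I}$ is simply the union $\bigcup_{i \in I} A_i$, as recalled at the start of the subsection. Consequently $Q = \tau(A) = \bigcup_{i \in I} \tau(A_i)$; note that, in contrast to \eqref{GroupCoverEq} in the proof of \lemref{BasicProfLem}, no topological closure is needed here, because the union is already all of $A$. The family $\{\tau(A_i)\}_{i \in I}$ of subgroups of $Q$ is again directed.

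Now I would run the finiteness argument from \lemref{BasicProfLem}: for each of the finitely many $g \in Q$ pick an index $i_g$ with $g \in \tau(A_{i_g})$, and use directedness (extended to finitely many indices by the induction noted in the excerpt) to find a single $j \in I$ with $A_j \geq A_{i_g}$ for all $g \in Q$. Then $\tau(A_j) = Q$, so $Q$ is a homomorphic image of the $n$-generated abstract group $A_j$, whence $d(Q) \leq n$. As $Q$ was an arbitrary finite quotient, \cite[Lemma 2.5.3]{RZ} gives $d(\widehat{A}) \leq n$.

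I do not expect a genuine obstacle here: the argument is essentially the abstract-group counterpart of \corref{BasicProfCor}, with the profinite completion inserted only at the end to translate ``every finite quotient is $n$-generated'' into a bound on $d(\widehat{A})$. The one point that must be handled with care is the identification of the finite quotients of $\widehat{A}$ with those of $A$, which is exactly the universal property of the profinite completion; once that is in place, the directedness-plus-finiteness step is routine.
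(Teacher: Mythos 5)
Your proof is correct and takes essentially the same route as the paper: the paper's entire proof is ``apply \corref{BasicProfCor} to $\widehat{A}$ and the closures $\{\overline{A_i}\}_{i \in I}$ in $\widehat{A}$'', and you have simply inlined that corollary's content (the directedness-plus-finiteness covering argument of \lemref{BasicProfLem} together with \cite[Lemma 2.5.3]{RZ}) while working with the abstract union $A = \bigcup_{i \in I} A_i$ and the finite quotients of $A$ rather than with closed subgroups of $\widehat{A}$. The only difference is cosmetic—your observation that no closure is needed replaces the paper's step of passing to the closures $\overline{A_i}$—so nothing further is required.
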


\begin{proof}

Apply \corref{BasicProfCor} to $\widehat{A}$, and the closures $\{\overline{A_i}\}_{i \in I}$ in $\widehat{A}$.

\end{proof}

We are now up to proving \thmref{ThirdRes}.

\begin{corollary} \label{AbstFratCor}

Let $G$ be a group for which every subgroup $H$ whose profinite completion is finitely generated, is itself finitely generated. Let $n \in \mathbb{N}$, and let $\mathcal{F} \neq \emptyset$ be a family of $n$-generated subgroups of $G$. Then $\mathcal{F}$ contains a maximal element.

\end{corollary}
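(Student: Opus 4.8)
The statement is an abstract-group analogue of Theorem \ref{FratThm}, and the natural plan is to mimic that proof via Zorn's Lemma, using the profinite-completion machinery just assembled to reduce the key rank bound to the profinite case. First I would take an arbitrary chain $\mathcal{C}$ in $\mathcal{F}$ and set $U \defeq \langle \mathcal{C} \rangle$, the abstract subgroup generated by the members of the chain. Since $\mathcal{C}$ is a chain it is directed, and each of its members is $n$-generated, so Corollary \ref{ProfRankBoundCor} applies and yields $d(\widehat{U}) \leq n$; in particular $\widehat{U}$ is finitely generated.

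At this point the hypothesis on $G$ enters, and this is the step I expect to be the crux. The assumption is that any subgroup of $G$ with finitely generated profinite completion is itself finitely generated, so from $d(\widehat{U}) \leq n < \infty$ we deduce that $U$ is a finitely generated abstract group. The remaining task is to show that $U \in \mathcal{F}$, i.e. that $U$ is actually $n$-generated, not merely finitely generated. The bound $d(\widehat{U}) \leq n$ does not by itself force $U$ to be $n$-generated as an abstract group, so the subtlety is to convert the profinite rank bound into a genuine upper bound on the abstract rank. Here I would argue as in the pro-$p$ proof: because $U$ is finitely generated, the quotient map $U \to U / \Phi(U)$ (or a suitable finite quotient detecting a minimal generating set) factors through the completion, and the directedness of $\mathcal{C}$ together with Lemma \ref{BasicProfLem}, applied inside $\widehat{U}$ to the directed family of closures $\{\overline{H}\}_{H \in \mathcal{C}}$, produces a single $H \in \mathcal{C}$ whose closure already surjects onto the relevant finite quotient. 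Lifting this back, one finds $H \in \mathcal{C}$ with $U = H$, so that $U \in \mathcal{C} \subseteq \mathcal{F}$ is the desired upper bound.

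With every chain in $\mathcal{F}$ thus bounded above inside $\mathcal{F}$, Zorn's Lemma delivers a maximal element, completing the proof. The only genuinely new ingredient relative to Theorem \ref{FratThm} is the passage between the abstract group $U$ and its completion $\widehat{U}$: the generation hypothesis on $G$ guarantees $U$ is finitely generated, and the Frattini-type argument transported through the completion guarantees that the chain stabilizes, so that the upper bound $U$ lies in $\mathcal{F}$ rather than merely in its completion. I would double-check that the finite quotient used to detect a minimal generating set of $U$ is visible in $\widehat{U}$ (which holds since every finite quotient of $U$ factors through $\widehat{U}$), as this is what legitimizes applying Lemma \ref{BasicProfLem} in the profinite setting to conclude about $U$ itself.
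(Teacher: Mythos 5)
Your proposal tracks the paper's proof correctly up to and including the key reduction: take a chain $\mathcal{C}$, let $U$ be the subgroup it generates (equivalently, its union), get $d(\widehat{U}) \leq n$ from \corref{ProfRankBoundCor}, and invoke the hypothesis on $G$ to conclude that $U$ is finitely generated as an abstract group. But your finishing step --- transporting a Frattini-type argument through $\widehat{U}$ --- has a genuine gap, and in fact it replaces a trivial correct step with an incorrect one. First, for a general finitely generated profinite group $\widehat{U}$ the Frattini subgroup need not be open (e.g. $\widehat{\mathbb{Z}}/\Phi(\widehat{\mathbb{Z}}) \cong \prod_p \mathbb{Z}/p\mathbb{Z}$ is infinite), so there is no canonical finite Frattini quotient to feed into \lemref{BasicProfLem}; the paper's Theorem \ref{FratThm} relies on the pro-$p$ hypothesis precisely for this. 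Second, and more fundamentally, even if you produce a finite quotient of $\widehat{U}$ and an $H \in \mathcal{C}$ whose closure $\overline{H}$ surjects onto it, the most such an argument can yield is $\overline{H} = \widehat{U}$, i.e. that $H$ is \emph{dense} in $\widehat{U}$. Density of an abstract subgroup in the profinite completion does not imply equality with $U$ (a proper subgroup can meet every finite-index subgroup in full, e.g. the kernel of a surjection from a free group onto an infinite simple group), so "lifting this back" to $U = H$ is exactly the step that fails.

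The paper's own conclusion is much simpler and avoids the completion entirely at this stage: once the hypothesis on $G$ hands you a \emph{finite} generating set $S \subseteq U$, each element of $S$ lies in some member of the chain $\mathcal{C}$, and since $\mathcal{C}$ is totally ordered a single $R \in \mathcal{C}$ contains all of $S$, hence contains $\langle S \rangle = U$, hence equals $U$. Thus $U = R \in \mathcal{F}$ is an upper bound for $\mathcal{C}$ and Zorn's Lemma applies. Note that this also resolves the worry you correctly raised --- that $d(\widehat{U}) \leq n$ does not bound the abstract rank of $U$ --- not by bounding $d(U)$ directly, but by showing $U$ literally equals an $n$-generated member of the chain. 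Replace your Frattini paragraph with this observation and the proof is complete.
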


\begin{proof}

Let $\mathcal{C}$ be a chain in $\mathcal{F}$, and let $U$ be its union. Since $\mathcal{C}$ is a chain, \corref{ProfRankBoundCor} implies that $d(\widehat{U}) \leq n$. By our assumption on $G$, there exists a finite generating set $S \subseteq U$. Since $\mathcal{C}$ is a chain, we can find an $R \in \mathcal{C}$ which contains $S$, and thus all of $U$. Therefore, $U = R \in \mathcal{F}$ is an upper bound for $\mathcal{C}$. By Zorn's Lemma, $\mathcal{F}$ has a maximal element.

\end{proof}

Let us now briefly explain why \thmref{SecondRes} applies to limit groups.

\begin{proposition} \label{ProfCompLimProp}

Let $L$ be a limit group, and let $H \leq L$ be a subgroup with a finitely generated profinite completion. Then $H$ is finitely generated.

\end{proposition}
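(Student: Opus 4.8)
The plan is to prove a subgroup-closed reformulation. Say that a group $G$ has property $(\star)$ if every subgroup of $G$ whose profinite completion is finitely generated is itself finitely generated; this is exactly the hypothesis on $G$ in \thmref{ThirdRes}, and it is what I must verify for limit groups. Two elementary observations set up the whole argument. First, $(\star)$ passes to arbitrary subgroups: if $G'\le G$ and $K\le G'$ has $\widehat K$ finitely generated, then $K\le G$, so $(\star)$ for $G$ already forces $K$ finitely generated. Second, by the embedding theorem of Kharlampovich--Myasnikov, every limit group embeds into an iterated extension of centralizers of a finitely generated free group. Combining these, it suffices to prove $(\star)$ for the tower groups $G_0\le G_1\le\cdots$, where $G_0$ is free and each $G_{i+1}=G_i *_{C_i}(C_i\times\mathbb Z^{m_i})$ is obtained from $G_i$ by extending a finitely generated free abelian centralizer $C_i$. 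I would induct on the height $i$.

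The base case $G_0=F$ is the free case and is clean: a subgroup $H\le F$ is free by Nielsen--Schreier, and $\widehat H$ is then the free profinite group of the same rank, so $\widehat H$ is finitely generated precisely when $H$ is. This is the prototype the argument generalizes, and it is the content of Takahasi's theorem. For the inductive step I would let $H\le G_{i+1}$ with $\widehat H$ finitely generated and let $H$ act on the Bass--Serre tree $T$ of the splitting $G_{i+1}=G_i *_{C_i}(C_i\times\mathbb Z^{m_i})$. By Bass--Serre theory $H$ is the fundamental group of a graph of groups over $\Lambda=H\backslash T$, with vertex groups the intersections $H\cap G_i^{\,g}$ and $H\cap(C_i\times\mathbb Z^{m_i})^g$ and edge groups the intersections $H\cap C_i^{\,g}$. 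The abelian pieces are subgroups of finitely generated free abelian groups — recall that all abelian subgroups of a limit group, and of these tower groups, are finitely generated free abelian — hence automatically finitely generated. So $H$ is finitely generated provided that $\Lambda$ is finite and that each vertex group $H\cap G_i^{\,g}$ is finitely generated; and each such vertex group is a subgroup of $G_i$, so the inductive hypothesis makes it finitely generated as soon as \emph{its} profinite completion is finitely generated.

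The main obstacle, and essentially the only content beyond bookkeeping, is to propagate the hypothesis that $\widehat H$ is finitely generated down to the two finiteness statements just isolated: that $\Lambda$ is finite and that each $\widehat{H\cap G_i^{\,g}}$ is finitely generated. For this I would pass to profinite completions of the graph-of-groups decomposition. Limit groups and these tower groups are good in the sense of Serre and their abelian splittings are efficient, so one expects $\widehat H$ to be the profinite fundamental group of the associated graph of profinite groups; finite generation of $\widehat H$ should then force the underlying graph $\Lambda$ to be finite and each profinite vertex group, through which $\widehat{H\cap G_i^{\,g}}$ factors, to be finitely generated. Establishing that the profinite decomposition behaves in this way — the efficiency/goodness input — is the step I expect to require the most care.

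A cleaner, homology-flavored substitute clarifies the mechanism and already settles the free base case: for any group one has $d(\widehat H)\ge\dim_{\mathbb F_p}H_1(H;\mathbb F_p)$ for every prime $p$. An infinite $\Lambda$, or an infinitely generated vertex group, should produce an infinite-dimensional $H_1(H;\mathbb F_p)$ for some $p$ via a Mayer--Vietoris computation over the finite graph, contradicting finite generation of $\widehat H$. The delicate point here is that residual freeness of $H$, inherited from the ambient limit group, is exactly what rules out the divisibility phenomena — embodied by $\mathbb Z[1/2]$, which is not residually free precisely because a product of free groups has no nontrivial divisible elements — that could otherwise keep every $\dim_{\mathbb F_p}H_1(H;\mathbb F_p)$ finite while $H$ fails to be finitely generated. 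I would therefore organize the write-up around the efficient profinite decomposition, falling back on this homological detection wherever the vertex groups are free or free abelian.
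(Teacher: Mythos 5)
There is a genuine gap in the inductive step, and it is more structural than the ``goodness/efficiency'' issue you flag. Your induction needs the implication ``$\widehat H$ finitely generated $\Rightarrow$ $\widehat{H\cap G_i^{\,g}}$ finitely generated'' for the vertex groups, i.e.\ you need finite generation of the profinite completion to pass from a group to its subgroups. This is false: $F_2$ contains normal subgroups of infinite rank, and the profinite completion of an infinite-rank free group is not finitely generated (it surjects onto arbitrarily large elementary abelian $p$-groups), even though $\widehat{F_2}$ is $2$-generated. The same example shows the contrapositive direction is equally blocked: a vertex group with non-finitely-generated completion does not force $\widehat H$ to be non-finitely-generated. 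Even granting an efficient profinite graph-of-groups decomposition of $\widehat H$ (which is itself doubtful here, since $H$ is not assumed finitely generated and $\Lambda$ may be infinite, whereas goodness/efficiency results are stated for finitely generated groups over finite graphs), the vertex completions would only be identified with \emph{closed subgroups} of $\widehat H$, and closed subgroups of finitely generated profinite groups need not be finitely generated. What does behave well under profinite completion is passage to \emph{quotients}, and that is exactly what the paper's proof exploits: it takes Sela's decomposition of $L$ as a graph of groups with cyclic edge groups, restricts it to $H$, and in each case of the dichotomy on the first Betti number of the resulting graph $X$ produces an explicit quotient of (a piece of) $H$ with non-finitely-generated completion --- a free group of infinite rank when the Betti number is infinite, and otherwise, after collapsing edges so that all vertex groups of an infinite tree $T_1$ are non-cyclic, a surjection of each vertex group onto $\mathbb Z^2$ (via full residual freeness) yielding a tree of copies of $\mathbb Z^2$ that surjects onto $\bigoplus\mathbb Z/2\mathbb Z$.

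Your homological fallback is closer in spirit to the paper (the surjection onto $\bigoplus\mathbb Z/2\mathbb Z$ is precisely a witness that $\dim_{\mathbb F_2}H_1(H;\mathbb F_2)=\infty$), but as written it is not a proof either: the finiteness of $\Lambda$ is never established, and you correctly identify $\mathbb Z[1/2]$ --- an ascending union along an infinite ray, not finitely generated yet with procyclic, hence finitely generated, profinite completion and with $\dim_{\mathbb F_p}H_1$ finite for every $p$ --- as the phenomenon to be excluded, but you do not supply the argument that excludes it. The paper's mechanism for this is the observation that abelian subgroups of limit groups are finitely generated free abelian, so an infinite tree in $X$ must contain infinitely many non-cyclic vertex groups; this is the input that replaces your appeal to residual freeness and makes the $\mathbb Z^2$-surjections, and hence the $\bigoplus\mathbb Z/2\mathbb Z$ quotient, available. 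Without that step, neither version of your argument closes.
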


\begin{proof}

Suppose that $H$ is not finitely generated. By \cite[Theorem 3.2]{S}, $L$ decomposes as a graph of groups $Y$ with cyclic edge groups. This induces a decomposition of $H$ as a graph of groups $X$ which must be infinite since $H$ is not finitely generated. If $X$ has an infinite first Betti number, then $H$ surjects onto a free group of infinite rank, so in particular, its profinite completion is not finitely generated.

We may thus assume that the first Betti number of $X$ is finite, which implies that
\begin{equation}
X = C \cup T_1 \cup \dots \cup T_n
\end{equation}
where $C$ is compact, $n \in \mathbb{N}$, and $T_1, \dots, T_n$ are infinite trees with a unique leaf each. Hence, in order to show that the profinite completion of $H$ is not finitely generated, it is sufficient to show this for the fundamental group of $T_1$. For that, recall that every abelian subgroup of a limit group is finitely generated free abelian so infinitely many vertex groups in $T_1$ are not cyclic. Since edge groups are cyclic, by collapsing edges we can assure that all vertex groups in $T_1$ are not cyclic. It follows from fully residual freeness that every vertex group surjects onto $\mathbb{Z}^2$, so the fundamental group of $T_1$ surjects onto the fundamental group of a tree of groups in which every vertex group is $\mathbb{Z}^2$ and every edge group is either $\{1\}$ or $\mathbb{Z}$. The latter group surjects onto an infinite direct sum of $\mathbb{Z}/2\mathbb{Z}$ so its profinite completion is not finitely generated.

\end{proof}

The next corollary follows \cite[Corollary]{Hig}.

\begin{corollary} \label{AutCor}

Let $G$ be a limit group, let $\alpha$ be an automorphism of $G$, and let $H \leq G$ be a finitely generated subgroup which is mapped by $\alpha$ into itself. Then $\alpha(H) = H$.

\end{corollary}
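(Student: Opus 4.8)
The plan is to convert the hypothesis $\alpha(H) \subseteq H$ into an \emph{ascending} chain of finitely generated subgroups and then apply the chain condition furnished by \thmref{ThirdRes}. Applying the automorphism $\alpha^{-1}$ to the inclusion $\alpha(H) \subseteq H$ yields $H \subseteq \alpha^{-1}(H)$, and iterating $\alpha^{-1}$ (which preserves inclusions, being a bijection of $G$) produces
\begin{equation}
H \subseteq \alpha^{-1}(H) \subseteq \alpha^{-2}(H) \subseteq \cdots.
\end{equation}
Since each $\alpha^{-k}$ is an automorphism of $G$, the subgroup $\alpha^{-k}(H)$ is generated by the images under $\alpha^{-k}$ of a generating set of $H$, so every term of this chain is $n$-generated, where $n \defeq d(H)$.

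Next I would check that \thmref{ThirdRes} is applicable to the limit group $G$: by \propref{ProfCompLimProp}, every subgroup of $G$ whose profinite completion is finitely generated is itself finitely generated, which is exactly the hypothesis required by \thmref{ThirdRes}. Hence the nonempty family $\mathcal{F} \defeq \{\alpha^{-k}(H) : k \geq 0\}$ of $n$-generated subgroups of $G$ contains a maximal element, say $\alpha^{-m}(H)$. Because $\mathcal{F}$ is totally ordered by inclusion, maximality of $\alpha^{-m}(H)$ forces $\alpha^{-k}(H) = \alpha^{-m}(H)$ for all $k \geq m$; in particular $\alpha^{-m}(H) = \alpha^{-(m+1)}(H)$. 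Applying $\alpha^{m+1}$ to both sides of this equality gives $\alpha(H) = H$, as desired.

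The argument is short, and its only substantive ingredient is the chain condition of \thmref{ThirdRes} (which rests, for limit groups, on \propref{ProfCompLimProp}); there is no real obstacle to overcome here. The one point that must be handled carefully is the direction of the inclusions: it is the \emph{inverse} automorphism that must be applied, so that $\alpha(H) \subseteq H$ gives rise to an ascending rather than a descending chain, and the existence of a maximal element then genuinely forces termination rather than merely bounding the chain from above.
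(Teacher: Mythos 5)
Your proof is correct and follows essentially the same route as the paper: both arguments rest on the maximal-element property of \thmref{ThirdRes} (made applicable to limit groups by \propref{ProfCompLimProp}) applied to a family of $d(H)$-generated subgroups built from iterates of $\alpha$. The only cosmetic difference is that the paper extracts a maximal element from the family of all subgroups $K$ with $d(K) \leq d(H)$ and $\alpha(K) \lneq K$ and derives a contradiction, whereas you apply maximality directly to the orbit chain $H \subseteq \alpha^{-1}(H) \subseteq \alpha^{-2}(H) \subseteq \cdots$; both are valid instances of the same idea.
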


\begin{proof}

Set 
\begin{equation} \label{defAutFam}
\mathcal{F} \defeq \{K \leq G \ | \ d(K) \leq d(H), \ \alpha(K) \lneq K\}
\end{equation} 
and suppose that $H \in \mathcal{F}$. By \corref{AbstFratCor} and \propref{ProfCompLimProp}, there exists a maximal $M \in \mathcal{F}$. It is easy to verify that $\alpha^{-1}(M) \in \mathcal{F}$, so $\alpha^{-1}(M) = M$ by maximality. Hence, $\alpha(M) = M$ - a contradiction. 

\end{proof}

\section*{Acknowledgments}

I would like to sincerely thank Henry Wilton for answering many questions, and for his proof of \propref{ProfCompLimProp}. Special thanks go to Pavel Zalesskii for all the helpful discussions. This research was partially supported by a grant of the Israel Science Foundation with cooperation of UGC no. 40/14.

\item Author's address: Open Space - Room Number 2, Schreiber Building (Mathematics), Tel-Aviv University, Levanon Street, Tel-Aviv, Israel.

\item Author's email: markshus@mail.tau.ac.il

\item Author's website: \url{markshus.wix.com/math}

\end{document}